\theoremstyle{plain}
\newtheorem{theorem}{Theorem}[section]
\newtheorem{definition}[theorem]{Definition}
\newtheorem{proposition}[theorem]{Proposition}
\newtheorem{lemma}[theorem]{Lemma}
\numberwithin{theorem}{section}
\numberwithin{equation}{section}
\newcommand{\average}{{\mathchoice {\kern1ex\vcenter{\hrule height.4pt
width 6pt depth0pt} \kern-9.7pt} {\kern1ex\vcenter{\hrule
height.4pt width 4.3pt depth0pt} \kern-7pt} {} {} }}
\def\R{\mathbb{R}}
\def\div{\text{div}}
\renewcommand{\a }{\alpha }
\renewcommand{\b }{\beta }
\renewcommand{\d}{\delta }
\newcommand{\D }{\Delta }
\newcommand{\e }{\varepsilon }
\newcommand{\G }{\Gamma}
\newcommand{\n }{\nabla }
\renewcommand{\phi}{\varphi}
\renewcommand{\th }{\theta }
\renewcommand{\O }{\Omega }
\newcommand{\ov}{\overline}
\newcommand{\be}{\begin{equation}}
\newcommand{\ee}{\end{equation}}
\newcommand{\de}{\partial}
\newcommand{\Ds}{(-\D)^s}
\newcommand{\N}{\mathbb{N}}
\renewcommand{\H}{{\mathcal H}}
\newcommand{\cG}{{\mathcal G}}
\newcommand{\cH}{{\mathcal H}}
\newcommand{\cP}{{\mathcal P}}
\newcommand{\B}{{\bf B}}
\renewcommand{\epsilon}{\varepsilon}
 \renewcommand{\div}{\textrm{div}}
\newcommand{\pwkrm}{P_{r_m}^{x_m}W_{k_m}}
\newcommand{\Hplus}{\cH_{\O}^+}
\newcommand{\Hom}{\cH_+^{x_0,\nu}}
\newcommand{\Homk}{\cH_+^{x_k,\nu_k}}
\newcommand{\pd}[2]{\frac{\partial #1}{\partial #2}}
\newcommand{\bdis}{\begin{displaymath}}
\newcommand{\edis}{\end{displaymath}}
\newcommand{\ben}{\begin{eqnarray}}
\newcommand{\een}{\end{eqnarray}}
\newcommand{\bene}{\begin{eqnarray*}}
\newcommand{\eene}{\end{eqnarray*}}
\newcommand{\Rn}{\mathbb{R}^N}
\newcommand{\RN}{\R_+^{N+1}}
\newcommand{\cinfc}{C^\infty_c}
\begin{document}

\title[Regularity degenerate elliptic equation]
{Boundary Regularity for a degenerate elliptic equation with mixed boundary conditions}

\author[Alassane Niang ]{Alassane Niang}
\date{\today}
\address{\hbox{\parbox{5.7in}{\medskip\noindent{A. Niang\\
Department of Mathematics, Faculty of Science and Technics, Cheikh Anta Diop University of Dakar (UCAD)\\
Phone. : +221 33 825 05 30\\
B.P. 5005 Dakar-Fann, Senegal.\\[3pt]
         {\em{E-mail address: }}{\tt alassane4.niang@ucad.edu.sn.}}}}}

\begin{abstract}
	We consider a function $U$ satisfying a degenerate elliptic equation on $\RN:=(0,+\infty)\times\Rn$ with mixed Dirichlet-Neumann boundary conditions. The Neumann condition is prescribed on a bounded domain $\O\subset\Rn$ of class $C^{1,1}$, whereas the Dirichlet data is on the exterior of $\O$. We prove H\"older regularity estimates of $\frac{U}{d_\O^s}$, where $d_\O$ is a distance function defined as $d_\O(z):=\textrm{dist}(z,\Rn\setminus\O)$, for $z\in\ov{\RN}$. The degenerate elliptic equation arises from the Caffarelli-Silvestre extension of the Dirichlet problem for the fractional Laplacian. Our proof relies on compactness and blow-up analysis arguments.
\end{abstract}

\maketitle

\section{Introduction}\label{Intro}
	
	This paper is concerned with regularity estimates of solutions to degenerate mixed elliptic problems.	More precisely, for $s\in(0,1)$, we consider the differential operators $M_s$ and $N_s$ given by $M_sU(t,x):=\div_{t,x}(t^{1-2s}\nabla_{t,x} U)(t,x)$ and $N_sU(t,x):=-t^{1-2s}\pd{U}{t}(t,x)$, for $(t,x)\in(0,+\infty)\times\Rn$. Now let $f\in L^\infty(\Rn)$ and $U\in\dot{H}^1(t^{1-2s};\RN)$ satisfying
		\be\label{eq:pbm1}
		\left\lbrace
			\begin{array}{ll}
				M_sU=0 & \textrm{ in } \R^{N+1}_+, \\
				\lim_{t\rightarrow0}N_sU(t,\cdot)= f& \textrm{ on } \O,\\
				U=0 & \textrm{ on } \Rn\setminus\O,
			\end{array}
		\right.
		\ee
		where
		\bdis
			\dot{H}^1(t^{1-2s};\RN):=\left\lbrace W\in L^1_{loc}(\RN):~\int_{\RN}t^{1-2s}\vert\nabla W\vert^2dtdx<+\infty\right\rbrace.
		\edis
		Equation \eqref{eq:pbm1} is understood in the weak sense, see \eqref{eq:weak-sol}. Here and in the following, $\RN:=\{(t,x)\in\R\times\Rn:t>0\}$ and $\O$ is a bounded domain of class $C^{1,1}$ in $\Rn$.
Problem \eqref{eq:pbm1} is a weighted (singular or degenerate, depending on the value of $s\in(0,1)$) elliptic equation on $\RN$ with mixed boundary conditions. The weight $t^{1-2s}$ belongs to the Muckenhoupt class $A_2$, i.e. for any ball $\B\subset\R^{N+1}$, there exists a constant $C$ such that
		\bdis
			\left(\dfrac{1}{\vert \B\vert}\int_\B \vert t\vert^{1-2s}dtdx\right)\left(\dfrac{1}{\vert \B\vert}\int_\B \vert t\vert^{2s-1}dtdx\right)\leq C,
		\edis
		see \cite{Mucken-1972} for more details.\\
		Regularity estimates and Harnack inequalities for solutions to degenerate elliptic equations with mixed boundary conditions have been studied by many authors, we refer to \cite{FJK-wiener, FKS-degen, cabre-sire, Caff-Silv, JLX,zaremba-1910,lev-dee-93,bjorn-deg-ell-mixed-prob, kass-mady-ellipt-mixed, savare-regul-ellipt-prblm}. Important applications to these equations can be found in \cite{bucur-valdinoci-nonlocal-diffusions,mills-dudu-mbvp,fabrikant-mbvp}.

In the present paper, we are interested in the regularity of $\frac{U}{d_\O^s}$ up to the interface $\{0\}\times\de\O$ of Dirichlet and Neumann data. Equation \eqref{eq:pbm1} can be seen as a local version of the following fractional elliptic equation
	\be\label{eq:DEP}
	\left\lbrace
		\begin{array}{ll}
			(-\Delta)^su=f & \textrm{ in }\O,\\
			u=0 & \textrm{ in }\Rn\setminus\O,
		\end{array}
	\right.
	\ee
	where $u$ is the trace of $U$. Here $\Ds$ is the fractional Laplacian defined as $\Ds v(x)=C\lim_{\e\to 0}\int_{|x-y|>\e}\frac{v(x)-v(y)}{|x-y|^{N+2s}}\, dy$, with $C$ a positive normalization constant.
Indeed, in 2007, Caffarelli and Silvestre \cite{Caff-Silv} obtained an extension theorem that renders \eqref{eq:DEP} somewhat local. They proved that for every $u\in \dot{H}^s(\R^N)$, there exists a unique $U\in \dot{H}^{1}(t^{1-2s};\R^{N+1}_+)$ satisfying
	\bdis
		\left\lbrace
			\begin{array}{ll}
				M_sU=0 & \textrm{ in } \R^{N+1}_+, \\
				U=u & \textrm{ on } \Rn
			\end{array}
		\right.
	\edis
	and such that
		\bdis
			\lim_{t\rightarrow0}N_sU(t,\cdot)=k_s\Ds u,
		\edis
		where $k_s$ is a constant depending only on $s$, see e.g. \cite{cabre-sire}, and
		\bdis
			\dot{H}^s(\Rn):=\left\lbrace v\in L^1_{loc}(\Rn):~\int_{\Rn}\int_{\Rn}\dfrac{\left(v(x)-v(y)\right)^2}{\vert x-y\vert^{N+2s}}dxdy<+\infty\right\rbrace.
		\edis
		A function $U\in\dot{H}_{\O}(t^{1-2s};\RN)$ is a weak solution of \eqref{eq:pbm1} if
		\be\label{eq:weak-sol}
			\int_{\RN}t^{1-2s}\nabla U(t,x)\nabla\Psi(t,x)dtdx=k_s\int_{\O}f(x)tr(\Psi)(x)dx \qquad \textrm{ for all }\Psi\in\dot{H}_{\O}(t^{1-2s};\RN),
		\ee
		where $f$ is as in \eqref{eq:DEP}, $tr(\Psi)$ means trace of $\Psi$ on $\{0\}\times \Rn$ and
		$$
			\dot{H}_{\O}(t^{1-2s};\RN):=\left\lbrace U\in\dot{H}^1(t^{1-2s};\RN):tr(U)\in\dot{H}^s(\Rn)\right\rbrace.
		$$
		
		The Caffarelli-Silvestre extension, because of its local nature, is very often used to prove qualitative properties of solutions to problems involving the fractional Laplacian, see for instance \cite{FW-nonexistence,cabre-sire,silv-on-the-diff,JLX,kim-lee-jfa-2011,caff-salsa-silv}.
	Equation \eqref{eq:DEP} is a special case of integro-differential equations called nonlocal equations. The study of nonlocal equations have attracted several researchers in the last years since they appear in different physical models; from water waves, signal processing, materials sciences, financial mathematics etc. We refer to \cite{DPE-hitch} and the references therein for further motivations.\\
	Let us now recall some of the main boundary regularity results in the case of problem \eqref{eq:DEP} itself. In \cite{RS-bound}, Ros-Oton and Serra first proved that for $f\in L^\infty(\R^N)$ and $\O$ of class $C^{1,1}$, $u/\d^s_\O$ belongs to $C^{0,\a}(\ov\O)$, for some $\a\in (0,1)$ and $u$ satisfying \eqref{eq:DEP}.  Here and in the following  $\d_\O(x)=\textrm{dist}(x,\R^N\setminus\O)$.\\
Exploiting H\"ormander's theory for pseudo-differential operators, Grubb \cite{Grubb1,Grubb2} proved that $u/\d_\O^s\in C^\infty(\ov\O)$ if $f$ is $C^\infty-$regular and $\O$ of class $C^\infty$, for the fractional Laplacian. More recently, Ros-Oton and Serra \cite{RS-fully,RS-stable} extended and generalised their result to fully nonlinear nonlocal operators. They showed that if $f\in C^{0,\a}(\Rn)$ ($f\in L^\infty(\Rn)$) and $\O$ is of class $C^{2,\a}$ ($\O$ of class $C^{1,1}$) then $u/\d_\O^s\in C^{s+\a}(\ov{\O})$ ($u/\d_\O^s\in C^{s-\e}$ for any $\e>0$) for $\a>0$. Recently in \cite{fall-schroding-17}, the author proves H\"older estimates up to the boundary of $\O$, for $u$ and the ratio $\frac{u}{\d_\O^s}$, where $\O$ is of class $C^{1,\gamma}$, for $\gamma>0$ and $u$ is a weak solution of a nonlocal Schr\"odinger equation, with $f$ in some Morrey spaces.
		
		The main goal of this paper is to study the same type of regularity for problem \eqref{eq:pbm1}.  Our main result is stated in the following
		\begin{theorem}\label{t:I}
		Let $s\in(0,1)$, $f\in L^\infty(\Rn)$ and $\O$ be a bounded domain of class $C^{1,1}$ in $\Rn$. Let $W\in \dot{H}^{1}( t^{1-2s};\RN) $ be a weak solution to
		\be \label{eq:W-solves-introduc}
			\left\lbrace
			\begin{array}{ll}
				M_sW=0 & \textrm{ in } \R^{N+1}_+,\\
				\lim_{t\rightarrow0}N_sW(t,\cdot)=f & \textrm{ on } \O,\\
				W=0 &  \textrm{ on } \Rn\setminus\O.
			\end{array}
			\right.
		\ee
		Then, for any $0<\epsilon<s$, there exists a function $\Psi\in C^{s-\epsilon}([0,1]\times\overline{\O})$ such that
		\bdis
				W=d^s_\O\Psi.
		\edis
		Moreover,
		\bdis
			\left\lVert\Psi\right\rVert_{C^{s-\epsilon}([0,1]\times\overline{\O})}\leq C\left(\parallel W\parallel_{L^\infty(\RN)} + \parallel f\parallel_{L^\infty(\Rn)}\right),
		\edis
		where $d_\O(t,x)=(t^2+\d_\O^2(x))^{1/2}$, for $(t,x)\in\ov{\RN}$ and $\d_\O(x)=\textrm{dist}(x,\R^N\setminus\O)$. Here  $C$ is a positive constant depending only on $\O$,  $N$, $s$ and $\epsilon$.
	\end{theorem}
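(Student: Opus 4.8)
The plan is to adapt the compactness-and-blow-up scheme of Ros-Oton and Serra \cite{RS-bound} to the degenerate operator $M_s$ and to the change of boundary condition along the interface $\{0\}\times\partial\O$. Since $M_s$ is a divergence-form operator with the $A_2$-weight $t^{1-2s}$, the De Giorgi--Nash--Moser and Schauder theory for $A_2$-weighted equations gives interior estimates in $\RN$ as well as boundary estimates near the \emph{smooth} portions of the boundary — near $\{0\}\times\O$, where only the Neumann condition is active, and near $\{0\}\times(\Rn\setminus\overline\O)$, where only the Dirichlet condition is active, both pieces being $C^{1,1}$. Hence $\Psi=W/d_\O^s$ already has the claimed regularity away from the interface, and the whole difficulty is to obtain a uniform $C^{s-\e}$ bound for $\Psi$ in balls centred at points $z_0=(0,x_0)$ with $x_0\in\partial\O$. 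Near such a $z_0$ I would use (a signed version of) $\d_\O$ as one of the spatial coordinates: it is $C^{1,1}$ up to $\partial\O$, so this flattens the interface to $\{t=0,\ x_N=0\}$ and turns \eqref{eq:W-solves-introduc} into a degenerate equation of the same divergence form, with Lipschitz coefficients that converge to constants under dilations about $z_0$. Modulo this reduction the model limiting configuration is $\RN$ carrying the Neumann condition $N_sV=0$ on $\Sigma_+:=\{t=0,\ x_N>0\}$ and the Dirichlet condition $V=0$ on $\Sigma_-:=\{t=0,\ x_N\le0\}$, with distance to the Dirichlet set $d_\Sigma:=(t^2+(x_N)_+^2)^{1/2}$.

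\textbf{Step 1 (pointwise growth bound via barriers).} First I would construct explicit sub- and supersolutions. In the model configuration there is a function $\Theta_s=\Theta_s(t,x_N)$, positive and homogeneous of degree $s$ in $(t,x_N)$, with $M_s\Theta_s=0$ in $\RN$, $N_s\Theta_s=0$ on $\Sigma_+$, $\Theta_s=0$ on $\Sigma_-$ and $c^{-1}d_\Sigma^s\le\Theta_s\le c\,d_\Sigma^s$; concretely $\Theta_s$ is the Caffarelli--Silvestre extension of $x\mapsto(x_N)_+^s$, and its one-dimensional angular profile solves an explicit linear ODE. Transplanting $\Theta_s$ to $\O$ along $\d_\O$ and correcting it by lower-order terms — this is where the $C^{1,1}$ regularity of $\O$ enters, to control the error produced by bending a flat barrier — produces functions of the form $C\,d_\O^s\pm(\text{correction})$ which, after also absorbing $\|f\|_{L^\infty}$, sandwich $W$ near the interface. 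The comparison principle for \eqref{eq:pbm1} then yields
\bdis
|W(z)|\le C\big(\|W\|_{L^\infty(\RN)}+\|f\|_{L^\infty(\Rn)}\big)\,d_\O^s(z)\qquad\text{for }z\text{ near }\{0\}\times\partial\O,
\edis
and in particular $\Psi\in L^\infty$.

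\textbf{Step 2 (blow-up and reduction to a Liouville statement).} To upgrade $L^\infty$ to $C^{s-\e}$, I would argue by contradiction together with the customary bookkeeping that selects the largest scale at which the rescaled oscillation is of unit size. If the bound fails, one obtains normalized weak solutions $W_k$ (with $\|W_k\|_{L^\infty(\RN)}+\|f_k\|_{L^\infty(\Rn)}\le1$), interface points $z_k$, constants $\ell_k\in\R$, radii $r_k\downarrow0$ and factors $\theta_k\to\infty$ for which the rescalings
\bdis
V_k(z):=\frac{W_k(z_k+r_kz)-\ell_k\,d_\O^s(z_k+r_kz)}{r_k^{\,s}\,\theta_k}
\edis
have the property that $V_k/d_\O^s(z_k+r_k\,\cdot)$ has oscillation $1$ on $B_1$, while $\|V_k\|_{L^\infty(B_R)}\le C R^{\,2s-\e}$ for all $1\le R\le1/r_k$ (subtracting $\ell_k\,d_\O^s$ only shifts the ratio by a constant and is used to optimize the growth bound). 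By Step 1, by the interior and smooth-boundary estimates recalled above, and by the equicontinuity at the flattened edge supplied by the bound $|V_k|\le C\,d_\Sigma^s$ there, the family $(V_k)$ is precompact in $C^0_{\mathrm{loc}}(\overline{\RN})$; passing to the limit in the weighted weak formulation \eqref{eq:weak-sol} (legitimate because the dilated coefficients converge and $t^{1-2s}\in A_2$) one obtains an entire weak solution $V_\infty$ of the model mixed problem with $|V_\infty(z)|\le C(1+|z|)^{\,2s-\e}$ and with $V_\infty/d_\Sigma^s$ non-constant. Since $2s-\e<s+1$, the Liouville theorem below forces $V_\infty=c\,\Theta_s$, whose ratio $V_\infty/d_\Sigma^s$ \emph{is} constant — a contradiction. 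Unwinding the reductions yields the uniform $C^{s-\e}$ estimate for $\Psi$ and hence Theorem~\ref{t:I}.

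\textbf{The Liouville theorem} underpinning Step 2, which I expect to be the main obstacle, is the following: if $V\in\dot H^1_{\mathrm{loc}}(t^{1-2s};\RN)$ is a weak solution of $M_sV=0$ in $\RN$ with $N_sV=0$ on $\Sigma_+$ and $V=0$ on $\Sigma_-$, and $\|V\|_{L^\infty(B_R)}\le C(1+R)^{\gamma}$ for some $\gamma<s+1$, then $V=c\,\Theta_s$ for a constant $c$ (so, in particular, $\Theta_s$ is unique up to multiples and nondegenerate). The difficulty is that $t^{1-2s}$ degenerates on $\{t=0\}$ and the boundary condition changes type at the edge $\{t=0,\ x_N=0\}$, so the relevant homogeneous solutions are dictated by a mixed Dirichlet--Neumann eigenvalue problem on a half-sphere with a weight singular along the equator; one has to show that its only admissible homogeneity in the range $(0,s+1)$ is $s$, and then rule out non-homogeneous solutions of sub-$(s+1)$ growth. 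I would treat this through a separation-of-variables/Mellin analysis of the homogeneous solutions at the edge, combined with an Almgren-type monotonicity formula for the weighted energy to eliminate the remaining possibilities, or, alternatively, by a direct weighted-energy comparison against $\Theta_s$.
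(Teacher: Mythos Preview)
Your overall scheme (blow-up by contradiction, leading to a Liouville statement for the model half-space mixed problem) is the same as the paper's, but the execution differs in three substantive places.

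\medskip
\noindent\textbf{1. No barriers.} The paper does not carry out your Step~1. The pointwise bound $|W|\le Cd_\O^s$ and the boundedness of the ``slope'' at each interface point are obtained \emph{a posteriori}, as byproducts of the blow-up (Lemma~\ref{l:2} gives $|Q(x_0)|\le C$). This saves the construction and bending of barriers.

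\medskip
\noindent\textbf{2. Liouville via the trace, not via the degenerate equation.} This is the main divergence. You propose to prove the Liouville theorem for $M_s$ with the mixed condition directly (Almgren monotonicity, Mellin/separation of variables on the weighted half-sphere). The paper avoids this entirely: in Lemma~\ref{l:Vmholder} it writes the blow-up $V_m$ as the Poisson extension $V_m(t,\cdot)=\cP(t,\cdot)\star v_m$ of its trace, invokes \cite[Proposition~8.3]{RS-fully} to get $v_m\to K\,(x_N)_+^s$ on $\R^N$ (this is where the sub-$2s$ growth and the actual Liouville ingredient live, for the \emph{nonlocal} operator), and then passes to the limit inside the Poisson integral by dominated convergence. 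So no new Liouville theorem for $M_s$ is proved; the existing nonlocal one is lifted through $\cP$.

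\medskip
\noindent\textbf{3. Blow-up normalization.} The paper subtracts the $L^2(t^{1-2s};\B_r^+(x_0))$-projection $P_r^{x_0}W$ of $W$ onto the line $\R\,\Hom$ and scales by $r^{2s-\e}\Theta(r)$ (not $r^s\theta_k$); this yields the orthogonality condition \eqref{eq:optimal-condition}, which together with $V_\infty\in\R\,\Hom$ and $\|V_\infty\|_{L^\infty(\B_1^+)}\ge\tfrac12$ gives the contradiction in one line. With your choice $\ell_k\,d_\O^s$ and scaling $r_k^s\theta_k$, the growth bound $\|V_k\|_{L^\infty(B_R)}\le CR^{2s-\e}$ you claim does not follow directly from Step~1 (that only gives $|V_k|\le CR^s/\theta_k\to0$); you would need, as the paper does, the monotone quantity $\Theta(r)$ built from projections at every scale. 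Finally, the paper never flattens $\partial\O$ in the blow-up; it works with the exact barrier $\Hom$ at each $x_0\in\partial\O$, then compares $\Hom$, $\cH_\O^+:=h^+(t,\d_\O)$ and $d_\O^s$ only at the end through elementary pointwise estimates (Lemmas~\ref{l:estimates}--\ref{l:estimates2}).

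\medskip
In short: your route is viable but heavier (new barriers, new Liouville for $M_s$); the paper trades those for the $L^2$-projection trick and the Poisson-kernel reduction to the already-known nonlocal Liouville theorem.
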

	The result in Theorem \ref{t:I} was known in the case $s=1/2$, $\O$ of class $C^\infty$ and $f\in C^\infty(\ov\O)$, see e.g. \cite{costabel-dauge-asymp-expans,grubb-mixed-2015}. It does not seem to be an immediate task to derive Theorem \ref{t:I} from the nonlocal result in \cite{RS-fully,RS-stable}, by e.g. the Poisson kernel representation. We therefore have to study in details \eqref{eq:W-solves-introduc}, although our argument is inspired by \cite{RS-fully}.
	
The proof of Theorem \ref{t:I} is inspired by \cite{RS-fully}, which we explain in the following. First, we let $h^+:\R^2\to \R$ be the (1-dimensional) solution  to
\begin{align}\label{eq:hplus}
\begin{cases}
M_sh^+(t,r)&=0\qquad\textrm{ for $r\in \R$ and $t>0$},\\
\lim_{t\rightarrow0}N_sh^+(t,r)&=0  \qquad\textrm{ for $r>0$},\\
h^+(0,r)&=0  \qquad\textrm{ for $r\leq 0 $ }.
\end{cases}
\end{align}
In particular $h^+(0,r)=\max(r,0)^s$, see \cite{RS-fully}. Let $\nu (x_0)$ be the unit interior normal to  $\de\O$ at $x_0$.  Given $x_0\in \de \O$,  the function $\cH_+^{x_0,\nu}(t,x)=h^+(t, (x-x_0)\cdot \nu(x_0)) $ satisfies \eqref{eq:pbm1}, for $f=0$. For an explicit expression of  $h^+$, see Section  \ref{Sect3}. We note that $\cH_+^{x_0,\nu}$ belongs to the space
	\bdis
		L^2(t^{1-2s};\B):=\left\lbrace V:\RN\to\R,~\int_{\B}t^{1-2s}\vert V\vert^2dtdx<+\infty \right\rbrace,
	\edis
	for an open set $\B\subset\subset\ov\RN$.\\
	The main goal is then to derive the estimate
\be\label{eq:Taylor-expand-near-de-Om} 
| W(z)- Q(x_0)\Hom(z)|\leq CC_0 |z-x_0|^{2s-\e}, \qquad\textrm{ for all $z\in [0,1)\times B_{1/2}$},
\ee
	where $Q(x_0)\in\R$, $C_0=\|W\|_{L^\infty(\RN)}+\|f\|_{L^\infty(\R^N)}$ and $C$ is a positive constant depending only on $N$, $s,~\e$ and $\O$. Moreover $|Q(x_0)\Hom(z)|\leq C$ for every $x_0\in \de\O\cap B_{1/2}$ and $z\in\B_1^+(x_0)$.  We note that \eqref{eq:Taylor-expand-near-de-Om} can be seen as a Taylor expansion of $W$ near the interface $\{0\}\times\de\O$. To reach \eqref{eq:Taylor-expand-near-de-Om}, we use blow up analysis combined with a regularity estimate on $\ov\RN$ and the Liouville-type result on the half-space contained in Lemma \ref{l:Vmholder}. This argument was developped by Serra \cite{Serra-inter-reg} to prove interior regularity results for fully nonlinear nonlocal parabolic equations and by Ros-Oton and Serra \cite{RS-fully} to prove boundary regularity estimates for integro-differential equations.  Once we get \eqref{eq:Taylor-expand-near-de-Om}, we now deduce the result in the main theorem.

	The paper is organized as follows. In Section \ref{Sect2}, we give some notations and definitions of functional spaces and their associated norms for the need of this work. We state some preliminaries in Section \ref{Sect3}. In Section \ref{Sect4}, we prove an intermediate boundary regularity result for solution to equation \eqref{eq:pbm1} on $\ov\RN$ with the Neumann boundary condition only. We use blow up analysis and compactness arguments to prove \eqref{eq:Taylor-expand-near-de-Om} in Section \ref{Sect5}. In Section \ref{Sect6}, we prove some regularity estimates in the neighbourhood of the interface set $\de\O$. In Section \ref{Section7}, we give the complete proof of Theorem \ref{t:I}.\\
\textbf{Acknowledgement:} The author would like to thank Diaraf Seck and Mouhamed Moustapha Fall for helpfull discussions and encouragements. This work is supported by the NLAGA Project of the Simons foundation and the Post-AIMS bursary of AIMS-SENEGAL.\\

\section{Definitions and Notations}\label{Sect2}

	We start by introducing some spaces and their norms. Let $s\in(0,1)$, we define
	\bdis
	H^s(\Rn):=\left\lbrace v\in L^2(\Rn):~\int_{\Rn}\int_{\Rn}\dfrac{\left(v(x)-v(y)\right)^2}{\vert x-y\vert^{N+2s}}dxdy<+\infty\right\rbrace.
	\edis
	This space is endowed with the norm
	\bdis
		\Vert v\Vert_{H^s(\Rn)}:=\left(\int_{\Rn}\vert v\vert^2dx+\int_{\Rn}\int_{\Rn}\dfrac{\left(v(x)-v(y)\right)^2}{\vert x-y\vert^{N+2s}}dxdy\right)^{1/2}.
	\edis
	We let
	\be\label{eq:l-very-weak}
		\mathcal{L}_s(\Rn):=\left\lbrace v\in L^1_{loc}(\Rn):~ \int_{\Rn}\dfrac{\vert v(x)\vert}{1+\vert x\vert^{N+2s}}dx<+\infty\right\rbrace.
	\ee
	For $a\in(-1,1)$ and an open set $\B\subset\subset\ov\RN$, we denote
	\bdis
		L^2(t^a;\B):=\left\lbrace V:\RN\to\R,~\int_{\B}t^a\vert V\vert^2dtdx<+\infty \right\rbrace
	\edis
	endowed with the norm
	\bdis
		\Vert V\Vert_{L^2(t^a;\B)}:=\left(\int_{\B}t^a\vert V\vert^2dtdx\right)^{1/2}
	\edis
	and
	\bdis
		H^1(t^a;\B):=\left\lbrace V\in L^2(t^a;\B):\n V\in L^2(t^a;\B)\right\rbrace,
	\edis
	with the induced  norm
	\bdis
		\Vert V\Vert_{H^1(t^a;\B)}:=\left(\int_{\B}t^a\left(\vert V\vert^2+\vert\n V\vert^2\right)dtdx\right)^{1/2}.
	\edis

We recall the fractional Laplacian of $u\in\mathcal{L}_s(\Rn)\cap C^2_{loc}(\Rn)$,
	\be\label{eq:frlp}
		\Ds u(x):=C_{N,s}P.V.\int_{\Rn}\dfrac{u(x)-u(y)}{\vert x-y\vert^{N+2s}}dy,
	\ee
	where $C_{N,s}= \pi^{2s+N/2}\dfrac{\Gamma(s+N/2)}{\Gamma(-s)}$, $\Gamma$ is the usual Gamma function and P.V. is the Cauchy Principal Value.\\
	For $s\in(0,1)$, $\dot{H}^s(\Rn)$ coincides with the trace of $\dot{H}^1(t^{1-2s};\RN)$ on $\partial\RN=\{(t,x)\in\R\times\Rn:t=0\}$. In particular, every function $U\in \dot{H}^1(t^{1-2s};\RN)$ has a unique trace function $u=U_{\vert\Rn}\in \dot{H}^s(\Rn)$, see \cite{cabre-sire}.
	
	Let $f$ be a function and $\a\in(0,1)$, the H\"older seminorm of $f$ is given by
	\bdis
		\left[f\right]_{C^{0,\alpha}(\O)}:=\sup_{\substack{x,y\in\O \\ x\neq y}}\dfrac{\vert f(x)-f(y)\vert}{\vert x-y\vert^\alpha}.
	\edis
	For $k\in\N$, $f\in C^{k,\alpha}(\O)$ means that the quantity
	\bdis
		\Vert f\Vert_{C^{k,\alpha}(\O)}:=\sum_{l=0}^k\sup_{x\in \O}\left\lvert D^lf(x)\right\rvert + \left[D^kf\right]_{C^{0,\alpha}(\O)}
	\edis
	is finite. In this work, instead of writing $C^{k,\alpha}$, we will put $C^{k+\alpha}$ sometimes for the same definition.
	
	Let us now introduce some notations used throughout the paper, 
		\be\label{eq:ball}
			B_R(x_0):=\{x\in\Rn : \vert x-x_0\vert<R\},\qquad \B_R^+(x_0):=[0,R)\times B_R(x_0)
		\ee
		and
		$$
			\B_R(z_0):=\{z=(t,x)\in\R\times\Rn : \vert z-z_0\vert<R\}
		$$
		is the ball of center $z_0=(t_0,x_0)\in\R^{N+1}$ and radius $R$. We will use the variables $x$ and $z$ for the spaces $\Rn$ and $\R^{N+1}$ respectively. For simplicity, when $x_0=0$ and $z_0=0$, we simply write $B_R$ (or $\B_R^+$) and $\B_R$ respectively.
		
	We also define the distance functions $\d$ and $d$ by
		$$
			\delta(x):=\textrm{dist}(x,\R^N\setminus \O) \quad \textrm{and}\quad d(t,x):=\left(t^2+\delta^2(x)\right)^{1/2}=\textrm{dist}\left(z,\Rn\setminus\O\right),
		$$
		where $z=(t,x)\in\ov\RN$.
		
		Finally, for $x_0\in \de\O$, we let $\nu(x_0)$ be the interior normal  to $\de\O$ at $x_0$. We then  define 
		$$
			\bar{\d}_{x_0,\nu}(x):=\left[(x-x_0)\cdot\nu(x_0)\right]_+,~x\in\O.
		$$
		
		In this paper, all constants $C$ or $C(N,s)$ that we do not specify are positive universal constants.
\section{Preliminaries}\label{Sect3}

Let $\cH_+(t,x)= {h^+}(t,x_N),\forall x\in\Rn$ and $t>0$, where $h^+$ is as in \eqref{eq:hplus}. Then we have that   
\begin{align*}
	\begin{cases}
		M_s\cH_+=0 & \textrm{ in } \R^{N+1}_+,\\
		\lim_{t\rightarrow0}N_s\cH_+(t,\cdot)=0 & \textrm{ on } \{x_N>0\},\\
		\cH_+= 0&  \textrm{ on } \{x_N\leq 0\}.
	\end{cases}
\end{align*}
Recall that $\cH_+(t,x)=\cP(t,\cdot)\star(x_N)_+^s$, where $\cP(t,x)=C(N,s)\frac{t^{2s}}{\left(t^2+\vert x\vert^2\right)^{\frac{N+2s}{2}}}$ and $\star$ denotes the convolution product. For every $\d\in\R$ and $t>0$, we 	let
	$$
	h^+(t,\d)=C_{N,s}'t^{2s}\int_{\R}\dfrac{(y_N)_+^s}{(t^2+|\d-y_N|^2)^\frac{1+2s}{2}}dy_N= C_{N,s}'\int_{\R}\dfrac{(\d+t\rho)_+^s}{(1+\rho^2)^\frac{1+2s}{2}}\, d\rho.
	$$
 See for instance \cite{RS-fully}, using polar coordinates, letting $t=r \sin\th$ and $\d=r \cos\th$, with $\th\in (0,\pi)$ and $r>0$, we have 
\bdis
	h^+(t,\d)= C r^s\cos^{2s}(\th/2)\,  {}_2F_1\left(0, 1;1-s; \frac{1-\cos\th}{2}\right),
\edis
where $r=\sqrt{t^2+\d^2}$, $\th=\textrm{arctan}(\frac{t}{\d})$. Here $ {}_2F_1 $ is  the Hypergeometric function which can be expressed by the power series, for $0<x<1$,
\be\label{eq:power-series}
{}_2F_1\left(0, 1;1-s; x\right)=\sum_{n=0}^\infty a_n \frac{x^n}{n!},
\ee
with $a_n>0$.\\
Next, we consider a bounded domain $\O$ of class $C^{1,1}$. We denote by $\nu$ the interior normal to $\de\O$.  For $x_0\in\de\O$, we will consider the  function
\bdis
	\cH_+^{x_0,\nu}(t,x)=h^+(t, (x-x_0)\cdot \nu(x_0)),~x\in\O.
\edis
It is clear that
\begin{align}\label{eq:cH-solves}
\begin{cases}
	M_s\Hom=0 & \qquad \textrm{ in } \R^{N+1}_+,\\
	\lim_{t\rightarrow0}N_s\Hom(t,\cdot)=0 & \qquad \textrm{ on } \{(x-x_0)\cdot \nu(x_0)>0\},\\
	\Hom= 0 &  \qquad \textrm{ on } \{(x-x_0)\cdot \nu(x_0)\leq 0\}.
\end{cases}
\end{align}

\section{Regularity estimate up to the boundary for the degenerate equation with the Neumann boundary condition}\label{Sect4}

	The following result is stronger than needed since the $C^{s-\e}$ estimate for the solution $V$ in $\ov{\B_1^+}$ will be enough for our purpose.
	\begin{theorem}\label{t:interegul}
		Let $s\in(0,1)$ and $f\in L^\infty(\Rn)$. Let $V\in L^\infty(\RN)\cap H^1(t^{1-2s};\B_2^+)$ satisfy
		\begin{align*}
		\begin{cases}
			M_sV= 0 & \qquad \textrm{ in } \B_2^+,\\
			\lim_{t\rightarrow0}N_sV(t,\cdot)=f & \qquad \textrm{ on } B_2.
		\end{cases}
		\end{align*}
		Then $V\in C^{2s-\epsilon}(\ov{\B_1^+})$ for all $0<\epsilon<2s$. Moreover,
		\bdis
			\parallel V\parallel_{C^{2s-\epsilon}(\ov{\B_1^+})}\leq C\left(\parallel V\parallel_{L^\infty(\RN)}+\parallel f\parallel_{L^\infty(\Rn)}\right),
		\edis
		where $C$ is a positive constant depending only on $N$, $s$ and $\epsilon$.
	\end{theorem}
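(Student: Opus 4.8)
The plan is to establish the $C^{2s-\epsilon}(\ov{\B_1^+})$ estimate by a bootstrap based on the Campanato-type characterization of Hölder spaces, using the divergence structure and the $A_2$-weight $t^{1-2s}$. First I would reduce to the case $\norm{V}_{L^\infty(\RN)} + \norm{f}_{L^\infty(\Rn)}\le 1$ by scaling the equation (noting that $M_s$ is homogeneous and that the Neumann datum scales correctly under $V\mapsto V(r\,\cdot)$). Then, after even reflection across $\{t=0\}$ in the region where the Neumann datum would vanish, the core task is a decay-of-oscillation estimate: I would show that for every $z_0=(0,x_0)$ with $x_0\in B_1$ there is a constant $\ell_{z_0}\in\R$ such that
\bdis
\ave_{\B_r^+(z_0)} t^{1-2s}\,|V-\ell_{z_0}|^2\,dt\,dx \le C\, r^{2(2s-\epsilon)},\qquad 0<r<1,
\edis
together with the analogous estimate at interior points, which is classical for $A_2$-weighted divergence-form equations (De Giorgi–Nash–Moser plus the fact that $t^{1-2s}\in A_2$). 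Combined with Campanato's theorem for weighted measures, these two families of estimates yield $V\in C^{2s-\epsilon}$ up to $\{t=0\}$ with the stated bound.

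The decay estimate at boundary points is the heart of the matter, and I would obtain it by the standard \emph{iteration / compactness} dichotomy. One shows there exists $\theta\in(0,1)$ and, for each $z_0$, a sequence of constants $\ell_k$ such that
\bdis
\ave_{\B_{\theta^k}^+(z_0)} t^{1-2s}\,|V-\ell_k|^2 \;\le\; \theta^{2k(2s-\epsilon)}\Bigl(\norm{V}_{L^\infty}^2 + \norm{f}_{L^\infty}^2\Bigr),
\edis
with $|\ell_{k+1}-\ell_k|\le C\theta^{k(2s-\epsilon)}$. The inductive step is proved by contradiction: if it failed, one would produce a sequence $V_j$ of solutions (with right-hand data $f_j$) on dyadically shrinking balls, rescale each to the unit ball, subtract its average, and normalize the rescaled $L^2(t^{1-2s})$-norm to $1$; the normalization forces the rescaled Neumann data $f_j$ to tend to $0$ in $L^\infty$, so by the weighted $H^1$ compactness (Rellich for $A_2$-weights) and interior/boundary Caccioppoli estimates the limit $V_\infty$ solves $M_sV_\infty=0$ in $\B_1^+$ with vanishing Neumann condition on $B_1$. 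Here is where I would invoke the one-dimensional profile: after even reflection $V_\infty$ is a weighted-harmonic, bounded, globally defined function with polynomial growth $\le 2s-\epsilon<1$, and the Liouville-type statement (the $t$-even solutions of $M_s u=0$ with sublinear growth are constants in $x$ — this is the content of Lemma \ref{l:Vmholder}, to which the excerpt refers) forces $V_\infty$ to be constant; this contradicts the normalization of its oscillation. Summing the telescoping differences $\ell_k$ gives the Campanato decay.

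The main obstacle I anticipate is handling the nonhomogeneous Neumann condition cleanly at every scale: the rescaling that sends $\B_{\theta^k}^+$ to $\B_1^+$ multiplies $f$ by a factor $\theta^{k(2s - (2s-\epsilon))}=\theta^{k\epsilon}\to 0$ only because we accept the loss $\epsilon>0$, so the argument genuinely breaks at $\epsilon=0$, and one must carry this scaling bookkeeping carefully through the Caccioppoli/trace estimates to keep all constants independent of $k$ and $z_0$. A secondary technical point is passing to the limit in the weak formulation \eqref{eq:weak-sol} localized to $\B_1^+$: one needs a uniform weighted Sobolev bound on the rescaled solutions (from a boundary Caccioppoli inequality adapted to the mixed/Neumann datum) and the compact embedding $H^1(t^{1-2s};\B_1^+)\hookrightarrow L^2(t^{1-2s};\B_1^+)$, both of which are available for $A_2$-weights but must be quoted precisely. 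Once the two Campanato families are in hand, assembling them into the global $C^{2s-\epsilon}(\ov{\B_1^+})$ norm estimate, and covering $\ov{\B_1^+}$ by interior balls plus boundary balls, is routine.
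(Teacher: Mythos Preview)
Your Campanato--iteration plan is a reasonable alternative route, but as written it has a genuine gap in the range $s>1/2$. You assert that the blow-up limit has ``polynomial growth $\le 2s-\epsilon<1$'', yet for $s\in(1/2,1)$ and $\epsilon$ small one has $2s-\epsilon\in(1,2)$, and in the paper's convention $C^{2s-\epsilon}$ then means $C^{1,2s-1-\epsilon}$. A Campanato scheme that subtracts only \emph{constants} $\ell_k$ can never yield more than $C^{0,\alpha}$ with $\alpha<1$: if the weighted mean oscillation decayed like $r^{2(2s-\epsilon)}$ with $2s-\epsilon>1$, Campanato's theorem would force $V$ to be constant, not $C^{1,\beta}$. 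To push past exponent $1$ you must replace the $\ell_k$ by functions affine in $x$ (the $t$-direction needs separate treatment because of the weight), and the relevant Liouville statement becomes: entire $t$-even solutions of $M_su=0$ with vanishing Neumann data and growth strictly below $2$ are affine in $x$ and independent of $t$. That is true, but it is not Lemma~\ref{l:Vmholder}; that lemma classifies the blow-up limit for the \emph{mixed} Dirichlet--Neumann problem as a multiple of $\cH_+^{x_0,\nu}$, a different statement.

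By contrast, the paper bypasses any iteration by decomposing $V=\ov V+\wtilde V$: here $\ov V$ is the Caffarelli--Silvestre extension of the entire solution $\ov v$ of $(-\Delta)^s\ov v=\eta f$ on $\R^N$ (with $\eta$ a cutoff equal to $1$ on $B_2$), whose $C^{2s-\epsilon}(\R^N)$ regularity is quoted from Silvestre and transferred to $\ov V$ via the explicit Poisson kernel; the remainder $\wtilde V=V-\ov V$ then has zero Neumann data on $B_2$, so its even reflection solves $\div(|t|^{1-2s}\nabla\wtilde W)=0$ in $\B_2$ and is in fact $C^{2-\epsilon}$ by the estimates of Caffarelli--Salsa--Silvestre. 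This handles all $s\in(0,1)$ at once and outsources the delicate step to known fractional-Laplacian regularity, whereas your approach---once upgraded with affine correctors for $s>1/2$---would be more self-contained but noticeably longer.
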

	\begin{proof}
		Consider the cut-off function $\eta\in\cinfc(B_3)$ such that $\eta\equiv 1$ in $B_2$ and $0\leq \eta\leq 1$ in $\Rn$. Let $\ov{v}$ be the (unique) solution to the equation
		\bdis
			\Ds \ov{v}=\ov f \qquad \textrm{ in } \Rn,
		\edis
		where $\ov f:=\eta f$. By \cite[Proposition 2.19]{silv-obstac}, $\ov{v}\in C^{2s-\e}(\Rn)$ and
		\bdis
			\parallel\ov{v}\parallel_{C^{2s-\e}(\Rn)}\leq C\left(\parallel\ov{v}\parallel_{L^\infty(\Rn)}+\parallel \ov{f}\parallel_{L^\infty(\Rn)}\right),
		\edis
		where $C>0$ is a constant that depends only on $N$, $s$ and $\epsilon$.
		Now consider the Caffarelli-Silvestre extension $\ov{V}$ of $\ov{v}$, i.e
		\bdis
			\ov{V}(t,\cdot)=\cP(t,\cdot)\star\ov{v}
		\edis
		that verifies the equation
		\bdis
			\left\lbrace
			\begin{array}{ll}
				M_s\ov{V}=0 & \text{ in } \RN,\\
				\lim_{t\rightarrow0}N_s\ov{V}(t,x)=\Ds\ov{v}(x)=\ov{f}(x) & \text{ on } \Rn.
			\end{array}
			\right.
		\edis
		By a change of variable, we have
		\be\label{eq:conv-poisson-kernel}
			\ov{V}(t,x)=\left(\cP(t,\cdot)\star\ov{v}\right)(x)=\int_{\Rn}\ov{v}(x-ty)H_s(y)dy,
		\ee
		where $H_s(y)=\cP(1,y)=\dfrac{C}{(1+\vert y\vert^2)^{\frac{N+2s}{2}}}$ and verifies $\int_{\Rn}H_s(y)dy=1$. Then, for $z_1=(t_1,x_1),z_2=(t_2,x_2)\in\ov\RN$, we have
		\bene
			\vert \ov{V}(z_1)-\ov{V}(z_2)\vert & \leq & \vert z_1-z_2\vert^{2s-\epsilon}\parallel\ov{v}\parallel_{C^{2s-\epsilon}(\Rn)}\int_{\Rn}\max\{\vert y\vert^{2s-\epsilon},1\}H_s(y)dy,\\
			& \leq & C\vert z_1-z_2\vert^{2s-\epsilon}\left(\parallel\ov{v}\parallel_{L^\infty(\Rn)}+\parallel \ov{f}\parallel_{L^\infty(\Rn)}\right).
		\eene
		By \eqref{eq:conv-poisson-kernel}, it is clear that
		\bdis
			\parallel\ov{V}\parallel_{L^\infty(\RN)}\leq \parallel \ov{v}\parallel_{L^\infty(\Rn)} \leq C\parallel \ov{f}\parallel_{L^\infty(\Rn)}.
		\edis
		Therefore
		\be\label{eq:Vbarreg}
			\parallel\ov{V}\parallel_{C^{2s-\epsilon}(\ov\RN)}\leq C\left(\parallel\ov{V}\parallel_{L^\infty(\RN)}+\parallel \ov{f}\parallel_{L^\infty(\Rn)}\right),
		\ee
		where the constant $C>0$ depends only on $N$, $s$ and $\epsilon$.\\
		Now put $\widetilde{V}=V-\ov{V}$, then $\widetilde{V}$ satisfies
		\bdis
			\left\lbrace
			\begin{array}{ll}
				M_s\widetilde{V}=0 & \text{ in } \B_2^+,\\
				\lim_{t\rightarrow0}N_s\widetilde{V}(t,\cdot)=(1-\eta) f=0 & \text{ on } B_2.
			\end{array}
			\right.
		\edis
		Considering the even reflexion $\widetilde{W}$ of $\widetilde{V}$ in the variable $t$, we have
		\bdis
			\div\left(\vert t\vert^{1-2s}\nabla\widetilde{W}\right)=0 \textrm{ in } \B_2.
		\edis
		From \cite[Corollary 2.5]{caff-salsa-silv}, we have that for $x\in B_1$ and $t\in(-1,1)$ fixed,
		\be\label{eq:Dx2}
			 \left\lvert D^2_x\widetilde{W}(t,x)\right\rvert \leq C\parallel\widetilde{W}(t,\cdot)\parallel_{L^{\infty}(B_2)}.
		\ee
		By \cite[Proposition 2.6]{caff-salsa-silv}, we obtain
		\bdis
			\left\lvert \widetilde{W}_{tt}+\frac{1-2s}{\vert t\vert}\widetilde{W}_t\right\rvert\leq C\parallel \widetilde{W}\parallel_{L^\infty(\B_2)}.
		\edis
		Therefore
		\bdis
		 	\left\lvert \left(\vert t\vert^{1-2s}\widetilde{W}_t\right)_t\right\rvert \leq C \vert t\vert^{1-2s}\parallel \widetilde{W}\parallel_{L^{\infty}(\B_2)}
		\edis
		and hence
		\bdis
		 	\left\lvert \widetilde{W}_{tt}\right\rvert \leq C\parallel\widetilde{W}\parallel_{L^{\infty}(\B_2)}.
		\edis
		For $(t,x)\in\ov{\B_1}$, we have, by \eqref{eq:Dx2}, that
		\bdis
			\left\lvert \widetilde{W}_{tt}(t,x)\right\rvert + \left\lvert D^2_x\widetilde{W}(t,x)\right\rvert \leq C\parallel\widetilde{W}\parallel_{L^{\infty}(\B_2)}
		\edis
		which implies that
		\bdis
			\widetilde{W}\in C^{2-\epsilon}(\ov{\B}_1).
		\edis
		Thus, it follows that $ \widetilde{V}\in C^{2-\epsilon}(\ov{\B_1^+})$ and
		\bene
			\parallel \widetilde{V}\parallel_{C^{2-\epsilon}(\ov{\B_1^+})} & \leq & C\left(\parallel V\parallel_{L^{\infty}(\B_2^+)}+\parallel\ov{v}\parallel_{L^{\infty}(\Rn)}\right),\\
			& \leq & C\left(\parallel V\parallel_{L^{\infty}(\B_2^+)}+\parallel f\parallel_{L^{\infty}(B_2)}\right).
		\eene
		We finally obtain
		\bene
			\parallel V\parallel_{C^{2s-\epsilon}(\ov{\B_1^+})} & \leq & C\left( \parallel \widetilde{V}\parallel_{C^{2-\epsilon}(\ov{\B_1^+})}+\parallel \ov{V}\parallel_{C^{2s-\epsilon}(\ov{\RN})}\right),\\
			& \leq & C\left(\parallel V\parallel_{L^{\infty}(\B_2^+)}+\parallel f\parallel_{L^{\infty}(B_2)}\right),
		\eene
		since $\widetilde{V}=V-\ov{V}$.
	\end{proof}
	
\section{Toward regularity by blow up analysis}\label{Sect5}
\noindent
		For local boundary regularity results in $C^{1,1}$ domains, we fix the geometry of the domain as follows:
	\begin{definition}\label{d:fix-domain}
		We define $\cG$ the set of all interfaces $\Gamma$ with the following properties:\\
		there are two disjoint domains $\O^+$ and $\O^-$ satisfying $\ov{B_1}=\ov{\O^+}\cup\ov{\O^-}$ such that
		\begin{itemize}
			\item $\Gamma:=\de\O^+\setminus\de B_1=\de\O^-\setminus\de B_1$;
			\item $\G$ is a $C^{1,1}$ hypersurface;
			\item  $0\in\Gamma$.
		\end{itemize}
%			\item for any point $x_0\in\Gamma\cap\ov{B_{3/4}}$, there exist two balls $B_{r_0}(x_1)\subset\O^+$ and $B_{r_0}(x_2)\subset\O^-$ such that $\ov{B_{r_0}(x_1)}\cap\ov{B_{r_0}(x_2)}=\{x_0\}$.
	\end{definition}
	For $\Gamma\in\mathcal{G}$, we let $x_0 \in \Gamma\cap B_{1/2}$ and $W,~\Hom\in L^2(t^{1-2s};\B_r^+(x_0))$, for all $r>0$. Consider the $1$-dimensional subspace of $L^2(t^{1-2s};\B_r^+(x_0))$ spanned by $\Hom$ and given by
	$$
		\R\Hom=\left\lbrace Q\Hom, Q\in\R\right\rbrace\subset L^2(t^{1-2s};\B_r^+(x_0)).
	$$
	Let $P_r^{x_0}W$ be the orthogonal $L^2(t^{1-2s};\B_r^+(x_0))$-projection of $W$ on $\R\Hom$, that is
	\bdis
		\min_{h\in\R\Hom}\Vert W-h\Vert_{L^2(t^{1-2s};\B_r^+(x_0))}=\Vert W-P_r^{x_0}W\Vert_{L^2(t^{1-2s};\B_r^+(x_0))},
	\edis
	then $P_r^{x_0}W=Q_r(x_0)\Hom$, where
	$$
		Q_r(x_0)=\dfrac{\int_{\B_r^+(x_0)}W(z)\Hom(z)dz}{\int_{\B_r^+(x_0)}\left(\Hom(z)\right)^2dz}\in\R.
	$$
	Moreover $P_r^{x_0}W$ has the property that
	\be\label{eq:property1}
		\int_{\B_r^+(x_0)}\left(W-P_r^{x_0}W\right)(z)P^{x_0}_rW(z)dz=0.
	\ee
	We now state the following lemma which will be  useful later.	
	\begin{lemma}\label{l:2}
 		Let $s\in(0,1)$ and $W\in H^1(t^{1-2s};\B_1^+)$. Let $P^{x_0}_rW$ be the orthogonal $L^2(t^{1-2s};\B_r^+(x_0))$-projection of $W$ on $\R\Hom$ and suppose that for all $r\in(0,1)$,
		\bdis
			\Vert W-P^{x_0}_rW\Vert_{L^\infty(\B_r^+(x_0))}\leq C_0r^{2s-\e}.
		\edis
		Then, there exists $Q(x_0)\in\R$ with $ |Q(x_0)|\leq C$ such that, letting
	\be\label{eq:px0w}
		P^{x_0}W=Q(x_0)\Hom,
	\ee
	we have
		\bdis
			\parallel W-P^{x_0}W \parallel_{L^\infty(\B_r^+(x_0))}\leq CC_0r^{2s-\e},
		\edis
		where the constant $C>0$ depends only on $N$, $s$ and $\e$.
	\end{lemma}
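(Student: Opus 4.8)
The plan is to produce $Q(x_0)$ as the limit, as $r\downarrow 0$, of the coefficients $Q_r(x_0)$ appearing in $P_r^{x_0}W=Q_r(x_0)\Hom$, and to show this limit is attained at an algebraic rate; the conclusion then follows from a triangle inequality. The two mechanisms driving everything are the exact $s$-homogeneity of $\Hom$ and the fact that orthogonal projections are $1$-Lipschitz, and the bulk of the argument is the dyadic telescoping used by Ros-Oton and Serra \cite{RS-fully} and Serra \cite{Serra-inter-reg}.

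The first step I would carry out is to record the scaling of $\Hom$ about the point $(0,x_0)$. From the explicit formula in Section~\ref{Sect3} one has $h^+(\lambda t,\lambda\delta)=\lambda^s h^+(t,\delta)$ for every $\lambda>0$, so $\Hom(t,x)=h^+(t,(x-x_0)\cdot\nu(x_0))$ is homogeneous of degree $s$ about $(0,x_0)$. Changing variables $t=r\tau$, $x=x_0+r\xi$, and using that $|\nu(x_0)|=1$ together with the rotational invariance of $B_1$, this yields constants $c_0,c_1,c_2>0$ depending only on $N$ and $s$ — \emph{uniformly in $x_0$} — such that, for all $r\in(0,1)$,
\[
\|\Hom\|_{L^\infty(\B_r^+(x_0))}=c_0\,r^{s},\quad \|\Hom\|_{L^2(t^{1-2s};\B_r^+(x_0))}=c_1\,r^{(N+2)/2},\quad \|1\|_{L^2(t^{1-2s};\B_r^+(x_0))}=c_2\,r^{(N+2-2s)/2}.
\]

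Next I would estimate $|Q_\rho(x_0)-Q_r(x_0)|$ for $0<\rho\le r<1$. Since $P_r^{x_0}W\in\R\Hom$, projecting it again over the nested ball does nothing, i.e.\ $P_\rho^{x_0}(P_r^{x_0}W)=P_r^{x_0}W$, so by linearity $\big(Q_\rho(x_0)-Q_r(x_0)\big)\Hom=P_\rho^{x_0}\big(W-P_r^{x_0}W\big)$. Taking the $L^2(t^{1-2s};\B_\rho^+(x_0))$-norm and using contractivity of the projection, the inclusion $\B_\rho^+(x_0)\subseteq\B_r^+(x_0)$, the hypothesis $\|W-P_r^{x_0}W\|_{L^\infty(\B_r^+(x_0))}\le C_0r^{2s-\e}$, and the identities above, I expect
\[
|Q_\rho(x_0)-Q_r(x_0)|\le C\,C_0\,\frac{r^{2s-\e}}{\rho^{s}},\qquad\text{in particular}\qquad |Q_{r/2}(x_0)-Q_r(x_0)|\le C\,C_0\,r^{s-\e}.
\]
Applying this along the dyadic scales $r=2^{-j}$ and summing $\sum_j 2^{-j(s-\e)}$ — which converges precisely because $\e<s$ — shows that $(Q_{2^{-j}}(x_0))_j$ is Cauchy; I set $Q(x_0)$ equal to its limit, so that telescoping gives $|Q_{2^{-j}}(x_0)-Q(x_0)|\le CC_0\,2^{-j(s-\e)}$. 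For a general $r\in(0,1)$ I would pick $j$ with $2^{-j-1}<r\le 2^{-j}$ and apply the displayed estimate once more to pass from scale $2^{-j}$ to scale $r$, obtaining $|Q_r(x_0)-Q(x_0)|\le CC_0\,r^{s-\e}$ for all $r\in(0,1)$. The bound $|Q(x_0)|\le C$ I would get from the fixed scale $r=1/2$: by Cauchy--Schwarz and the second identity above, $|Q_{1/2}(x_0)|\le C\|W\|_{L^2(t^{1-2s};\B_1^+)}$, and then $|Q(x_0)|\le|Q_{1/2}(x_0)|+CC_0$.

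Finally, for $z\in\B_r^+(x_0)$ I would split, with $P^{x_0}W=Q(x_0)\Hom$,
\[
|W(z)-P^{x_0}W(z)|\le|W(z)-P_r^{x_0}W(z)|+|Q_r(x_0)-Q(x_0)|\,|\Hom(z)|\le C_0 r^{2s-\e}+CC_0\,r^{s-\e}\cdot c_0 r^{s},
\]
which is $\le CC_0\,r^{2s-\e}$; taking the supremum over $z\in\B_r^+(x_0)$ gives the claim with $C=C(N,s,\e)$. I expect the only genuinely delicate point to be the first step — making the norms of $\Hom$ scale with uniform constants in $x_0$, which rests on the exact $s$-homogeneity of $h^+$ and on $|\nu(x_0)|=1$ — together with the (elementary but essential) observation that $P_\rho^{x_0}$ fixes $\R\Hom$ over nested half-balls; once these are in hand the remainder is the routine telescoping described above.
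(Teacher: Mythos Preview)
Your proposal is correct and follows precisely the approach the paper intends: the paper omits the details and simply refers to \cite[Lemma~6.2]{RS-fully}, whose proof is exactly the dyadic telescoping argument you outline, driven by the $s$-homogeneity of $\Hom$ and the contractivity of the projection. The only minor remark is that the bound $|Q(x_0)|\le C$ in the statement tacitly presupposes a normalization of $W$ (as is imposed in the proof of Proposition~\ref{p:pbm}); your estimate $|Q(x_0)|\le C\|W\|_{L^2(t^{1-2s};\B_1^+)}+CC_0$ is the correct unnormalized version.
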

	\begin{proof}
		The proof is similar to the one of \cite[Lemma 6.2]{RS-fully}. We skip the details.
 	\end{proof} 	
	The main result of this section is contained in the following	
	\begin{proposition}\label{p:pbm}
		Let $\Gamma\in\cG$, see Definition \ref{d:fix-domain}.
		We let $s\in(0,1)$, $f\in L^\infty(\Rn)$ and assume that $W\in \dot{H}^1(t^{1-2s},\RN)\cap L^\infty(\RN)$  satisfies
		\be\label{eq:W}
			\left\lbrace
			\begin{array}{ll}
				M_sW=0 & \text{ in } \RN,\\
				\lim_{t\rightarrow0}N_sW(t,\cdot)=f & \text{ on } \O^+, \\
				W=0 &  \text{ on } \O^-.
			\end{array}
			\right.
		\ee
		Then, for all $x_0 \in \Gamma \cap B_{1/2}$,
		\be\label{eq:estimblowup}
			\vert W(z)-P^{x_0}W(z)\vert\leq C\vert z-x_0\vert^{2s-\epsilon}\left(\Vert W\Vert_{L^\infty(\RN)}+\Vert f\Vert_{L^\infty(\Rn)}\right), \quad\textrm{ for all } z\in   \B_1^+,
		\ee
		where $P^{x_0}W$ is given by \eqref{eq:px0w} and the positive constant $C$ depends only on $N,~s,~\e$ and $\Gamma$.
	\end{proposition}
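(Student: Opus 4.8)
The plan is to argue by contradiction and compactness, following the scheme of Ros-Oton and Serra. Suppose \eqref{eq:estimblowup} fails; then there exist sequences $\Gamma_k\in\cG$, points $x_k\in\Gamma_k\cap\overline{B_{1/2}}$, radii $r_k\downarrow 0$, right-hand sides $f_k$ with $\|f_k\|_{L^\infty}\le 1$, and solutions $W_k$ with $\|W_k\|_{L^\infty(\RN)}+\|f_k\|_{L^\infty}\le 1$ such that the blow-up quantity
$$
\theta(r):=\sup_k\sup_{\rho\ge r}\frac{\|W_k-P_\rho^{x_k}W_k\|_{L^\infty(\B_\rho^+(x_k))}}{\rho^{2s-\epsilon}}
$$
is $+\infty$ at every scale but, by Lemma \ref{l:2}, comparable to the (now assumed unbounded) left side. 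Using the standard trick of choosing $k$ and $\rho_k\ge r_k$ so that the ratio at scale $\rho_k$ is at least $\tfrac12\theta(\rho_k)$, one defines the renormalized functions
$$
V_k(z):=\frac{\bigl(W_k-P_{\rho_k}^{x_k}W_k\bigr)(x_k+\rho_k z)}{\rho_k^{2s-\epsilon}\,\theta(\rho_k)},
$$
which satisfy (i) $\|V_k\|_{L^\infty(\B_1^+)}\ge \tfrac12$, (ii) the growth control $\|V_k\|_{L^\infty(\B_R^+)}\le CR^{2s-\epsilon}$ for all $R\le 1/\rho_k$ (this is where the monotone quantity $\theta$ and the projection optimality \eqref{eq:property1} are used, exactly as in \cite[Lemma 6.2]{RS-fully} / Lemma \ref{l:2}), and (iii) the orthogonality $\int_{\B_1^+(0)}V_k\,\Hom(\text{rescaled})\,t^{1-2s}=0$ at unit scale.

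Next I would pass to the limit. Since after rescaling the domains $\Gamma_k$ converge (the $C^{1,1}$ bound from Definition \ref{d:fix-domain} is scale-invariant, and in the blow-up the interfaces flatten to the hyperplane $\{(x-x_\infty)\cdot\nu_\infty=0\}$), and since $N_sW_k\to 0$ after rescaling because the coefficient $\rho_k^{2s}/(\rho_k^{2s-\epsilon}\theta(\rho_k))=\rho_k^{\epsilon}/\theta(\rho_k)\to 0$, the limiting function $V_\infty$ solves the homogeneous problem $M_sV_\infty=0$ in $\RN$, $\lim_{t\to0}N_sV_\infty=0$ on the half-space $\{(x-x_\infty)\cdot\nu_\infty>0\}$, and $V_\infty=0$ on the complementary half-space. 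The convergence is in $C^{2s-\epsilon'}_{loc}(\overline{\RN})$ for $\epsilon'>\epsilon$ thanks to Theorem \ref{t:interegul} (applied at all interior-to-the-Neumann-side balls) together with the regularity estimate up to $\overline\RN$ that is quoted in the introduction as used in \eqref{eq:Taylor-expand-near-de-Om}; the uniform growth (ii) gives local uniform boundedness on every $\B_R^+$, so a diagonal argument produces $V_\infty$ with the same growth $\|V_\infty\|_{L^\infty(\B_R^+)}\le CR^{2s-\epsilon}$, hence $V_\infty\not\equiv 0$ by (i) passing to the limit, and $V_\infty$ still orthogonal to $\R\cH_+^{x_\infty,\nu_\infty}$ at scale $1$.

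Finally I would invoke the Liouville-type classification on the half-space — the result referred to as Lemma \ref{l:Vmholder} — which states that any solution of the homogeneous mixed Dirichlet-Neumann problem on $\RN$ with subquadratic growth $|V_\infty(z)|\le C(1+|z|)^{2s-\epsilon}$ (note $2s-\epsilon<2s$) must be a constant multiple of $h^+(t,(x-x_\infty)\cdot\nu_\infty)$, i.e.\ $V_\infty\in\R\,\cH_+^{x_\infty,\nu_\infty}$. Combined with the orthogonality, this forces $V_\infty\equiv 0$, contradicting $\|V_\infty\|_{L^\infty(\B_1^+)}\ge\tfrac12$. This proves \eqref{eq:estimblowup} for $r\le r_0$ small; the bound on $[r_0,1)$ follows from $\|W\|_{L^\infty}\le C_0$ and $|Q(x_0)|\le C$ in Lemma \ref{l:2}, after adjusting $C$. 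I expect the main obstacle to be step two: making the compactness rigorous simultaneously in the three moving objects — the rescaled solutions, the flattening interfaces $\Gamma_k$, and the vanishing Neumann data — and in particular justifying that the rescaled special solutions $\cH_+^{x_k,\nu_k}(x_k+\rho_k\cdot)/\rho_k^{s}$ converge to $\cH_+^{x_\infty,\nu_\infty}$ locally uniformly on $\overline\RN$ (using the explicit Poisson-kernel / hypergeometric expression from Section \ref{Sect3} and the $C^{1,1}$ control on $\nu_k$), so that the unit-scale orthogonality survives the limit; the uniform-up-to-the-corner Hölder estimate needed for equicontinuity is the other delicate point, and this is precisely why Theorem \ref{t:interegul} was proved first.
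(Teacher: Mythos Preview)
Your overall blow-up/contradiction scheme matches the paper's proof exactly: the monotone quantity $\Theta$, the choice of near-extremal scales, the renormalized sequence $V_m$, the lower bound $\|V_m\|_{L^\infty(\B_1^+)}\ge\tfrac12$, the orthogonality \eqref{eq:property1}, and the contradiction between ``limit lies in $\R\cH_+^{x_0,\nu}$'' and ``limit is orthogonal to $\R\cH_+^{x_0,\nu}$'' are all as in the paper.

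Where you diverge is in the role you assign to Lemma \ref{l:Vmholder} and in your compactness mechanism. You treat Lemma \ref{l:Vmholder} as a standalone Liouville classification to be applied \emph{after} a separate compactness step obtained from Theorem \ref{t:interegul}. In the paper, Lemma \ref{l:Vmholder} is instead the combined compactness-and-identification step: one writes $V_m(t,x)=\cP(t,\cdot)\star v_m$ via the Poisson kernel, invokes \cite[Proof of Proposition 8.3]{RS-fully} at the trace level to get the growth bound $\|v_m\|_{L^\infty(B_R)}\le R^\beta$ and the local H\"older bound on $v_m$, hence $v_m\to K\bar\delta_{x_0,\nu}^s$ locally uniformly (this is where the nonlocal Liouville theorem enters), and then passes to the limit inside the Poisson integral by dominated convergence to conclude $V_m\to K\Hom$.

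This matters because your proposed compactness route has a gap at the corner. Theorem \ref{t:interegul} requires the Neumann condition on the \emph{full} base ball $B_2$; it gives nothing across the Dirichlet--Neumann interface, so it cannot furnish equicontinuity of $V_m$ up to $\{t=0\}\cap\{(x-x_m)\cdot\nu_m=0\}$. Your closing sentence (``this is precisely why Theorem \ref{t:interegul} was proved first'') is therefore a misreading: Theorem \ref{t:interegul} is not used in the proof of Proposition \ref{p:pbm} at all (it is used later, in Lemma \ref{l:sharpregul}, on balls well inside the Neumann region). The paper avoids the corner difficulty entirely by working with the traces and the Poisson representation; if you want to keep your ``compactness first, Liouville second'' plan, you would need an independent up-to-the-corner H\"older estimate for the mixed problem, which is not available at this point of the argument.
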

	Remark that we can replace $\RN$ by $\B_1^+$ in \eqref{eq:W}.
	\begin{proof}
		For any $k\geq 1$, let $(\Gamma_k)\subset\mathcal{G},~(W_k)\subset\dot{H}^1(t^{1-2s},\RN)\cap L^\infty(\RN)$ and $(f_k)\subset L^\infty(\Rn)$ be sequences such that $W_k$ satisfies \eqref{eq:W} on $\RN$, the Neumann data on $\O_k^+$ is $f_k$ and the Dirichlet condition is on $\O_k^-$. Let $P_r^{x_k}W_k$ be the orthogonal $L^2(t^{1-2s};\B_r^+(x_k))-$projection of $W_k$ on $\R\Homk$ and $\nu_k\to\nu\in S^{N-1}$ the normal vector to $\Gamma_k$ towards $\O^+_k$. We suppose that $\lVert W_k\rVert_{L^\infty(\RN)}+\lVert f_k\rVert_{L^\infty(\Rn)}\leq 1$, for any $k\geq 1$.\\
		Assume that \eqref{eq:estimblowup} is not true, then by Lemma \ref{l:2},
\noindent
		\bdis
			\sup_{k\geq 1}\sup_{r>0}~r^{-2s+\e}~\lVert W_k-P_r^{x_k}W_k\rVert_{L^\infty(\B_{r}^+(x_k))}=+\infty.
		\edis
		Set
		\bdis
			\Theta(r):=\sup_{k}\sup_{r'>r}\dfrac{\parallel W_k-P_{r'}^{x_k}W_k\parallel_{L^\infty(\B_{r'}^+(x_0))}}{(r')^{2s-\e}}.
		\edis
		Clearly, $\Theta$ is a monotone nonincreasing function, it verifies
		\bdis
			\Theta(r)\nearrow+\infty\qquad \textrm{as}\quad r\searrow 0
		\edis
		and $\Theta(r)<+\infty$ for $r>0$, because $\Vert W_k\Vert_{L^\infty(\RN)}\leq 1$. Thus, by definition of the supremum, there exist sequences $r_m\searrow 0,~k_m$ and $x_m\rightarrow x_0\in\Gamma\cap B_{1/2}$ such that
		\be\label{eq:theta2}
			\dfrac{\parallel W_{k_m}-\pwkrm\parallel_{L^\infty(\B_{r_m}^+)(x_m)}}{r_m^{2s-\e}\Theta(r_m)}\geq\frac{1}{2}.
		\ee
		Let us consider the sequence
		\be\label{eq:blowup-seq}
			V_m(z):=\frac{W_{k_m}(x_m+r_mz)-\pwkrm(x_m+r_mz)}{r_m^{2s-\e}\Theta(r_m)}.
		\ee
		Then by \eqref{eq:theta2}, we get
		\be\label{eq:theta3}
			\parallel V_m\parallel_{L^\infty(\B_{1}^+)}\geq \frac{1}{2}.
		\ee
		Also by \eqref{eq:property1}, we obtain the orthogonality condition
		\be\label{eq:optimal-condition}
			\int_{\B_{1}^+}V_m(z)\cH_+^{x_m,\nu_m}(z) dz=0.
		\ee		
		Now, let $R\geq 1$ be fixed, $m$ large enough so that $r_mR<\frac{1}{2}$ and $z\in \B_{2R}^+(x_m)$, we have that
		\bene
				M_sV_m(z) &=&\frac{r_m^{2s-(2s-\e)}}{\theta(r_m)}\div\left((r_mt)^{1-2s}\nabla \left(W_{k_m}-\pwkrm\right)\right)(x_m+r_mz),\\
				&=& 0,
		\eene
		where we used \eqref{eq:cH-solves} and \eqref{eq:W}. We have also that
		\bene
			\lim_{t\rightarrow0}N_sV_m(t,x) &=&-\frac{r_m^{\e}}{\theta(r_m)}\lim_{r_mt\to 0}(r_mt)^{1-2s}\pd{(W_{k_m}-\pwkrm)}{t}(r_mt,x_m+r_mx),\\
			&=&\frac{r_m^{\e}}{\theta(r_m)}f(x_m+r_mx),\quad x\in\O_m^\star\cap B_{2R}(x_m),
		\eene
		where $\O_m^\star:=\{x\in\Rn : x_m+r_mx\in\O_{k_m}^+ \textrm{ and } (x-x_m)\cdot\nu_m(x_m)>0\}$. Then $V_m$ satisfies
		\begin{align*}
			\begin{cases}
				M_sV_m(t,x) =0 & (t,x)\in\B_{2R}^+(x_m),\\
				\lim_{t\rightarrow0}N_sV_m(t,x)=\frac{r_m^{\e}}{\theta(r_m)}f(x_m+r_mx) & x\in\O_m^\star\cap B_{2R}(x_m),\\
				V_m(0,x)=0 & x\in(\O_m^\star)^c\cap B_{2R}(x_m).
			\end{cases}
		\end{align*}
		By Lemma \ref{l:Vmholder}, see below, up to a subsequence,
		$$
			V_m\to V_\infty\in\R\Hom \textrm{ uniformly on compact subsets of $\ov\RN$, as $m\to +\infty$}
		$$
		and further $V_\infty$ satisfies
		\begin{align*}
			\begin{cases}
				M_sV_\infty=0 & \textrm{ in } \RN,\\
				\lim_{t\rightarrow0}N_sV_\infty(t,\cdot)=0 & \textrm{ on }\{(x-x_0)\cdot\nu(x_0)>0\},\\
				V_\infty=0 & \textrm{ on } \{(x-x_0)\cdot\nu(x_0)\leq 0\}.
			\end{cases}
		\end{align*}
		Passing to the limit in \eqref{eq:theta3} and \eqref{eq:optimal-condition}, we get a contradiction.
	\end{proof}
	
	The following result was used in the proof of Proposition \ref{p:pbm}.
	\begin{lemma}\label{l:Vmholder}
		Let $V_m$ be the same sequence given by \eqref{eq:blowup-seq} in the proof of Proposition \ref{p:pbm}. Then, up to a subsequence,
		\bdis
			V_m\to K\Hom:=K\cP(t,\cdot)\star\bar{\delta}_{x_0,\nu}^s,\textrm{ as $m\to+\infty$}
		\edis
		uniformly on compact subsets of $\overline{\RN}$, where $K\in\R$.
	\end{lemma}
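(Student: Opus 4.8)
The lemma packages two facts: a compactness statement --- the blow-up sequence $(V_m)$ of \eqref{eq:blowup-seq} subconverges, locally uniformly on $\ov\RN$, to a function $V_\infty$ solving a homogeneous mixed problem on the half-space --- and a Liouville classification of that limit. I would establish them in that order, the link being a growth estimate $\|V_m\|_{L^\infty(\B_R^+)}\le CR^{2s-\e}$ for $R\ge1$, which is inherited by $V_\infty$ and supplies the sub-$2s$ growth needed for the classification.

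\emph{Growth bound.} Write $P^{x_m}_\rho W_{k_m}=Q_\rho(x_m)\,\cH_+^{x_m,\nu_m}$ and note that $h^+$ is positively homogeneous of degree $s$, so $\|\cH_+^{x_m,\nu_m}\|_{L^\infty(\B^+_\rho(x_m))}=c\,\rho^s$ with $c$ independent of $m$. The minimality of the $L^2(t^{1-2s};\B^+_\rho(x_m))$-projection, together with the definition of $\Theta$, gives $\|P^{x_m}_{2\rho}W_{k_m}-P^{x_m}_{\rho}W_{k_m}\|_{L^\infty(\B^+_\rho(x_m))}\le C\rho^{2s-\e}\Theta(r_m)$ for $\rho\ge r_m$, hence $|Q_{2\rho}(x_m)-Q_\rho(x_m)|\le C\rho^{s-\e}\Theta(r_m)$. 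Summing along the dyadic chain from $r_m$ to $Rr_m$ (where $\e<s$ is used), then writing $W_{k_m}-P^{x_m}_{r_m}W_{k_m}=(W_{k_m}-P^{x_m}_{Rr_m}W_{k_m})+(Q_{Rr_m}(x_m)-Q_{r_m}(x_m))\,\cH_+^{x_m,\nu_m}$ and rescaling as in \eqref{eq:blowup-seq}, I obtain $\|V_m\|_{L^\infty(\B_R^+)}\le CR^{2s-\e}$ for $m\ge m_0(R)$, and therefore $|V_\infty(z)|\le C(1+|z|)^{2s-\e}$ in the limit.

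\emph{Compactness and identification of the limit problem.} Testing $M_sV_m=0$ against $\zeta^2V_m$ --- admissible since $V_m$ vanishes on the Dirichlet face --- and using that the rescaled Neumann datum $\frac{r_m^{\e}}{\Theta(r_m)}f(x_m+r_m\,\cdot\,)$ tends to $0$ in $L^\infty$ (because $r_m\to0$, $\Theta(r_m)\to+\infty$) gives a uniform bound on $\|V_m\|_{H^1(t^{1-2s};\B^+_R)}$ for each $R$. For equicontinuity I would combine the De Giorgi--Nash--Moser estimate for the $A_2$-degenerate operator in $\{t>0\}$, Theorem \ref{t:interegul} (after even reflection in $t$) on half-balls sitting in the open Neumann region, and a barrier near the Dirichlet face and the interface --- comparing $|V_m|$ with a multiple of $\cH_+^{x_m,\nu_m}$ plus a correction adapted to the local geometry --- to get a uniform modulus of continuity up to and across the interface; Arzel\`a--Ascoli then yields a subsequence $V_m\to V_\infty$ locally uniformly on $\ov\RN$ (and weakly in $H^1(t^{1-2s})$ on bounded sets), with $V_\infty\in C(\ov\RN)$. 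Since the rescaled $C^{1,1}$ interfaces $r_m^{-1}(\Gamma_{k_m}-x_m)$ flatten to the hyperplane $\{z\cdot\nu=0\}$, $\nu=\lim\nu_m$ (uniform curvature bound), the sets $\O_m^\star$ and $(\O_m^\star)^c$ converge to the complementary half-spaces; passing to the limit in the weak formulation of \eqref{eq:W} (right-hand side $\to0$) and transferring the Dirichlet condition through the uniform convergence, $V_\infty$ solves $M_sV_\infty=0$ in $\RN$, $\lim_{t\to0}N_sV_\infty=0$ on $\{x\cdot\nu>0\}$ and $V_\infty=0$ on $\{x\cdot\nu\le0\}$ --- the homogeneous mixed problem \eqref{eq:cH-solves}.

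\emph{Liouville classification and main obstacle.} Because $V_\infty$ has sub-$2s$ growth, its trace $v_\infty:=V_\infty(0,\cdot)$ lies in $\mathcal{L}_s(\Rn)$, and by uniqueness of the Poisson (Caffarelli--Silvestre) extension in the growth class $|V|\le C(1+|z|)^{2s-\e}$ one has $V_\infty=\cP(t,\cdot)\star v_\infty$; thus $\Ds v_\infty=0$ in $\{x\cdot\nu>0\}$ and $v_\infty=0$ outside it. I would then invoke the half-space Liouville theorem for the fractional Laplacian (as in \cite{RS-fully}): a function of $\mathcal{L}_s(\Rn)$ that is $s$-harmonic in a half-space, vanishes in its complement, and grows more slowly than $|x|^{2s}$ must be a real multiple of $[x\cdot\nu]_+^s=\bar{\delta}^s_{x_0,\nu}$ --- its proof first reduces $v_\infty$ to a one-dimensional profile via a Liouville argument on tangential increments, then classifies the one-dimensional problem, whose only admissible solutions are multiples of $x_+^s$ (consistent with the hypergeometric representation of $h^+$ recalled in Section \ref{Sect3}). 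Hence $V_\infty=\cP(t,\cdot)\star(K\,\bar{\delta}^s_{x_0,\nu})=K\Hom$ for some $K\in\R$, which is the assertion. The two points I expect to be delicate are the uniform regularity of the compactness step across the codimension-two junction $\{0\}\times\Gamma$, where the barrier must be tuned to the mixed geometry, and the reduction to a one-dimensional profile inside the half-space Liouville theorem.
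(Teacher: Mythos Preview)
Your outline is correct but follows a genuinely different route from the paper's proof.

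The paper works almost entirely through the Poisson representation. It writes $V_m=\cP(t,\cdot)\star v_m$ with $v_m=V_m(0,\cdot)$, and \emph{cites} from \cite[Proof of Proposition 8.3]{RS-fully} both the trace estimates $\|v_m\|_{L^\infty(B_R)}\le R^\beta$, $\|v_m\|_{C^{0,\alpha}(B_R)}\le C(R)$ and the convergence $v_m\to K\bar\delta_{x_0,\nu}^s$ on compact sets. The remaining work is purely to lift these to $V_m$: split $v_m=\eta_R v_m+(1-\eta_R)v_m$, bound the extension $V_m^1$ of the first piece directly from the Poisson kernel, and for the extension $V_m^2$ of the second piece (which vanishes on $B_{2R}$) use even reflection in $t$ and the interior H\"older estimate of \cite{caff-salsa-silv}. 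This yields $\|V_m\|_{C^{0,\alpha}(\ov{\B_R^+})}\le C(R)$, Arzel\`a--Ascoli gives a limit $V$, and passing to the limit \emph{inside the Poisson integral} by dominated convergence (using the growth bound to control the tails) gives $V=K\cP(t,\cdot)\star\bar\delta_{x_0,\nu}^s=K\Hom$ directly.

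Your approach instead stays at the extension level: growth via the dyadic projection argument, equicontinuity via Caccioppoli, DGNM and an interface barrier, then Liouville on the trace of the limit. This is conceptually fine, but the paper's route completely sidesteps the step you yourself flag as delicate. The only boundary regularity the paper needs is for $V_m^2$, whose trace vanishes on $B_{2R}$, so even reflection reduces it to an interior estimate; there is no barrier at the codimension-two junction at all. Your barrier step, by contrast, is not innocent: on the rescaled Dirichlet face, $V_m$ vanishes only on the intersection of $\{x_m+r_mx\in\O_{k_m}^-\}$ with $\{x\cdot\nu_m\le 0\}$, and the symmetric difference (of size $O(r_m|x|^2)$ from the $C^{1,1}$ curvature) carries a nonzero trace coming from $\cH_+^{x_m,\nu_m}$, so the comparison function must absorb this. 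It can be done, but you would essentially be reproving, in blow-up coordinates, the boundary control that the paper obtains later and more cheaply. In short: the paper trades self-containment for simplicity by leaning on the nonlocal results of \cite{RS-fully}; your argument is more intrinsic to the extension but pays for it with a harder compactness step.
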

	\begin{proof}
		For $m$ fixed, consider the function $v_m$, the trace of the function $V_m$ such that
	\bdis
		\left\lbrace
		\begin{array}{ll}
			M_sV_m(t,x)=0 &  (t,x)\in\R^{N+1}_+,\\
			\lim_{t\rightarrow0}N_sV_m(t,x)=f_m(x) & x\in\O_m^\star,\\
			V_m(0,x)= 0 & x\in\Rn\setminus\O_m^\star,\\
			V_m(0,x)=v_m(x) & x\in\Rn,
		\end{array}
		\right.
	\edis
	with $\O_m^\star:=\{x\in\Rn : x_m+r_mx\in\O_{k_m}^+ \textrm{ and } (x-x_m)\cdot\nu_m(x_m)>0\}$, where $\nu_m(x_m)$ is a unit normal vector to $\Gamma_{k_m}$ at $x_m$ pointing towards $\O_{k_m}^+$ and $f_m\to 0$ as $m\to+\infty$. In particular, from the first and the last equation above, we have that 
	\bdis
		V_m(t,x):=\int_{\Rn}v_m(y)\cP(t,x-y)dy=C\int_{\Rn}\dfrac{v_m(x-ty)}{(1+\vert y\vert^2)^{\frac{N+2s}{2}}}dy.
	\edis
	We notice that from \cite[Proof of Proposition 8.3]{RS-fully} (for $\a=0$), the sequence $v_m$ satisfies the following estimates,
	\be\label{eq:vmbound}
		\parallel v_m\parallel_{L^\infty(B_R)}\leq R^\beta,~ \textrm{ for every $R>1$ and $0<\b<2s$}
	\ee
	and for every $m$ such that $r_mR\leq 1$,
	\be\label{eq:vmholder}
		\parallel v_m\parallel_{C^{0,\alpha}(B_R)}\leq C(R) \textrm{ for some } \a\in(0,1).
	\ee
	Recalling $\bar{\delta}_{x_0,\nu}=\left[(x-x_0)\cdot\nu(x_0)\right]_+$, we have
	$$
		v_m\to K\bar{\delta}_{x_0,\nu}^s \textrm{ in $C^{0,\alpha}$ on compact subsets of $\Rn$ for some }K\in\R,~x_0\in\partial\RN \textrm{ and } \nu\in S^{N-1}.
	$$
	First, let us prove that
		\bdis
			\parallel V_m\parallel_{L^{\infty}(\overline{\B_R^+})}\leq CR^\beta, \textrm{ for every } R>1
		\edis
		and that for every $m\in\N$ such that $r_mR\leq 1$,
		\bdis
			\parallel V_m\parallel_{C^{0,\alpha}(\overline{\B_R^+})}\leq C(R),
		\edis
		where $C(R)$ depends on $R$. For $R>1$, we consider the cut-off function $\eta_R\in\cinfc(B_{3R})$ such that $\eta_R\equiv 1$ on $B_{2R}$ and $\vert \eta_R\vert<1$ on $\Rn$. Then, we can write
		\bdis
			V_m(t,x)=\int_{\Rn}(\eta_Rv_m)(x-ty)H(y)dy + \int_{\Rn}\left((1-\eta_R)v_m\right)(x-ty)H(y)dy,
		\edis
		where $H(y)=\frac{C}{\left(1+\vert y\vert^2\right)^{\frac{N+2s}{2}}}$.
		We set
		\bdis
			V^1_m(t,x):=\int_{\Rn}(\eta_Rv_m)(x-ty)H(y)dy.
		\edis
		For every $R>1$ and $(t,x)\in\overline{\B_R^+}$,
		\ben\label{eq:vm1bound}
			\vert V_m^1(t,x)\vert & \leq & C\int_{\Rn}\dfrac{\left\lvert (\eta_Rv_m)(x-ty)\right\rvert}{(1+\vert y\vert^2)^{\frac{N+2s}{2}}}dy,\nonumber\\
			&\leq &  C\parallel \eta_Rv_m\parallel_{L^{\infty}(\Rn)}\int_{\Rn}\frac{dy}{(1+\vert y\vert^2)^{\frac{N+2s}{2}}},\nonumber\\
			& \leq & C\parallel v_m\parallel_{L^{\infty}(B_{3R})}\leq CR^\beta,
		\een
		by using \eqref{eq:vmbound}. Now, for every $m\in\N$ such that $r_mR\leq 1$ and $z_1,z_2\in \overline{\B_R^+}$, we get
		\bene
			\left\lvert V_m^1(z_1)-V_m^1(z_2)\right\rvert & \leq & C\int_{\Rn}\dfrac{\left\lvert (\eta_Rv_m)(x_1-t_1y)-(\eta_Rv_m)(x_2-t_2y)\right\rvert}{(1+\vert y\vert^2)^{\frac{N+2s}{2}}}dy,\\
			& \leq & C\vert z_1-z_2\vert^\a[ \eta_Rv_m]_{C^{0,\alpha}(\Rn)}\int_{B_{3R}}\frac{\max(1,|y|^\alpha)}{(1+\vert y\vert^2)^{\frac{N+2s}{2}}}dy,\\
			& \leq & C(R)\vert z_1-z_2\vert^\alpha,
		\eene
		where we have used \eqref{eq:vmholder} in the last inequality. Thus, for every $m\in\N$ such that $r_mR\leq 1$, we have
		\be\label{eq:vm1holder}
			\parallel V^1_m\parallel_{C^{0,\alpha}(\overline{\B_R^+})}\leq C(R).
		\ee
		Next, we define
		\bdis
			V^2_m(t,x):=\int_{\Rn}\left((1-\eta_R)v_m\right)(x-ty)H(y)dy.
		\edis
		Notice that $v_m\in\mathcal{L}_s(\Rn)$ (see \eqref{eq:l-very-weak}) and $(1-\eta_R)v_m$ is continuous on $\Rn$. The function $V^2_m\in H^1(t^{1-2s},\B_{2R}^+)$ satisfies 
		\bdis
		\left\lbrace
		\begin{array}{ll}
			M_sV^2_m=0 & \textrm{ in } \B_{2R}^+,\\
			V^2_m=(1-\eta_R)v_m=0 & \textrm{ on } B_{2R}.
		\end{array}
		\right.
		\edis		
		Let $\widetilde{V}^2_m(t,x):=V^2_m(-t,x)$ be the even reflection of $V^2_m$, then we have that
		$$
			\div\left(\vert t\vert^{1-2s}\nabla \widetilde{V}^2_m\right)=0 \textrm{ in } \B_{2R},
		$$
		in the sense of distribution. Applying the result in \cite[Proposition 2.1]{caff-salsa-silv}, we find that there exists a positive constant $C=C(N,s)$ and $\alpha\in(0,\beta)$ such that
		\be\label{eq:v2mhold}
			\parallel \widetilde{V}^2_m\parallel_{C^{0,\alpha}(\overline{\B}_R)}\leq \frac{C}{R^\alpha}\parallel \widetilde{V}_m^2\parallel_{L^{\infty}(\B_{2R})} \leq \frac{2C}{R^\alpha}\parallel V_m^2\parallel_{L^{\infty}(\B_{2R}^+)}.
		\ee
		Let us now estimate $\parallel V_m^2\parallel_{L^\infty(\B_{2R}^+)}$. We put $v_m^R:=(1-\eta_R)v_m$. Then, for $(t,x)\in \B_{2R}^+$, we have
		\ben\label{eq:sum2i}
			\vert V_m^2(t,x)\vert &=& C\left\lvert \int_{\Rn}\dfrac{v_m^R(x-ty)}{(1+\vert y\vert^2)^{\frac{N+2s}{2}}}dy\right\rvert,\nonumber\\
			& = & C\int_{\vert y\vert\leq 1}\dfrac{\vert v_m^R(x-ty)\vert}{(1+\vert y\vert^2)^{\frac{N+2s}{2}}}dy+C\sum_{i=0}^{+\infty}\int_{2^i\leq\vert y\vert\leq 2^{i+1}}\dfrac{\vert v_m^R(x-ty)\vert}{(1+\vert y\vert^2)^{\frac{N+2s}{2}}}dy,\nonumber\\
			& \leq & C \parallel v_m^R\parallel_{L^\infty(B_{4R})}+C\sum_{i=0}^{+\infty}\parallel v_m^R\parallel_{L^\infty(\B_{2^{i+4}R})}\int_{2^i\leq\vert y\vert\leq 2^{i+1}}\dfrac{dy}{(1+\vert y\vert^2)^{\frac{N+2s}{2}}},\nonumber\\
			& \leq & C \parallel v_m^R\parallel_{L^\infty(B_{4R})}+C\sum_{i=0}^{+\infty}(2^{i+4}R)^{\beta}\int_{2^i\leq\vert y\vert\leq 2^{i+1}}\dfrac{dy}{\vert y\vert^{N+2s}},\nonumber\\
			& \leq & CR^\beta + CR^{\beta}\sum_{i=0}^{+\infty}2^{(i+4)\beta}2^{-i2s}\int_{1\leq\vert z\vert\leq 2}\dfrac{dz}{\vert z\vert^{N+2s}},\nonumber\\
			& \leq & CR^\beta + CR^{\beta}\sum_{i=0}^{+\infty}2^{i(\beta-2s)}, \\
			&\leq & CR^\beta,\nonumber
		\een
		where we have used \eqref{eq:vmbound}, the change of variable $y=2^iz$ and the fact that $\beta<2s$ in \eqref{eq:sum2i} so that the summation is finite. It follows that
			\be\label{eq:v2mbound}
				\parallel V^2_m\parallel_{L^\infty(\overline{\B_{R}^+})}\leq C(N,s)R^\beta.
			\ee
		Using \eqref{eq:v2mbound} in \eqref{eq:v2mhold}, we get
		\be\label{eq:v2mholder}
			\parallel V^2_m\parallel_{C^{0,\alpha}(\overline{\B_R^+})}\leq \frac{1}{2}\parallel\widetilde{V}^2_m\parallel_{C^{0,\alpha}(\overline{\B}_R)}\leq C(R).
		\ee
		Since $V_m=V_m^1+V_m^2$, we obtain
		\bdis
			\parallel V_m\parallel_{L^{\infty}(\overline{\B_R^+})}\leq CR^\beta \quad\textrm{ for every } R>1, \textrm{ with } r_mR\leq 1,
		\edis
		by \eqref{eq:vm1bound} and \eqref{eq:v2mbound}. Using \eqref{eq:vm1holder} and \eqref{eq:v2mholder}, we also have
		\bdis
			\parallel V_m\parallel_{C^{0,\alpha}(\overline{\B_R^+})}\leq C(R).
		\edis
		We then conclude that, up to a subsequence, the sequence $\left(V_m\right)$ converges uniformly to some function $V$ on compact subsets of $\overline{\RN}$ by Arzel\~a-Ascoli theorem. Recall that
		\bene
			V_m(t,x) &=& C\int_{\Rn}\dfrac{v_m(x-ty)}{\left(1+\vert y\vert^2\right)^{\frac{N+2s}{2}}}dy,\\
			&=& C\int_{\vert y\vert\leq 1}\dfrac{v_m(x-ty)}{(1+\vert y\vert^2)^{\frac{N+2s}{2}}}dy+C\sum_{i=0}^{+\infty}\int_{2^i\leq\vert y\vert\leq 2^{i+1}}\dfrac{v_m(x-ty)}{(1+\vert y\vert^2)^{\frac{N+2s}{2}}}dy.
		\eene
		Since $\parallel V_m\parallel_{L^\infty(\B_1^+)}$ is bounded then, by the dominated convergence theorem,
		\bdis
			\int_{\vert y\vert \leq 1} \dfrac{v_m(x-ty)}{(1+\vert y\vert^2)^{\frac{N+2s}{2}}}dy~~~\to~~~	K\int_{\vert y\vert \leq 1} \dfrac{\bar{\delta}_{x_0,\nu}^s(x-ty)}{(1+\vert y\vert^2)^{\frac{N+2s}{2}}}dy,
		\edis
		as $m\to +\infty$, recall that $v_m\to K\bar{\d}_{x_0,\nu}^s$ uniformly on compact subsets of $\Rn$. Now put
		$$
		A_m^i(t,x):=\int_{2^i\leq\vert y\vert\leq 2^{i+1}}\dfrac{v_m(x-ty)}{(1+\vert y\vert^2)^{\frac{N+2s}{2}}}dy.
		$$
		We now prove that $V_m\to \cP(t,\cdot)\star\bar{\d}_{x_0,\nu}^s$ uniformly on compact subsets of $\ov\RN$.
		Since $v_m~\to~K\bar{\delta}_{x_0,\nu}$ uniformly on compact subsets of $\Rn$ then, by the dominated convergence theorem, we have that
		\bdis
			\lim_{m\to+\infty}A_m^i(t,x)=\int_{2^i\leq\vert y\vert\leq 2^{i+1}}\lim_{m\to+\infty}\left(\dfrac{v_m(x-ty)}{(1+\vert y\vert^2)^{\frac{N+2s}{2}}}\right)dy=\int_{2^i\leq\vert y\vert\leq 2^{i+1}}\dfrac{\bar{\delta}_{x_0,\nu}^s(x-ty)}{(1+\vert y\vert^2)^{\frac{N+2s}{2}}}dy.
		\edis
		Let $r>0$ and $z=(t,x)\in \B_r^+$ fixed. With similar arguments as in \eqref{eq:sum2i}, we have that
		\bdis
			\vert A_m^i(z)\vert\leq C(r)\sum_{i=0}^{+\infty}2^{i(\beta-2s)}\leq C(r),
		\edis
		since $\beta-2s<0$. Consequently, by the dominated convergence theorem,
		\bene
			\lim_{m\to +\infty}\sum_{i=0}^{+\infty} A^i_m(z) &=& \sum_{i=0}^{+\infty}\lim_{m\to +\infty} A^i_m(z),\\
			&=& K\sum_{i=0}^{+\infty} \int_{2^i\leq\vert y\vert\leq 2^{i+1}}\dfrac{\bar{\delta}_{x_0,\nu}^s(x-ty)}{(1+\vert y\vert^2)^{\frac{N+2s}{2}}}dy,\\
			&=& K\int_{\vert y\vert\geq 1}\dfrac{\bar{\delta}_{x_0,\nu}^s(x-ty)}{(1+\vert y\vert^2)^{\frac{N+2s}{2}}}dy.
		\eene
		Finally, for every $z=(t,x)\in \B_r^+$, we conclude that
		\bene
			V(t,x) &=& K\int_{\vert y\vert\leq 1}\dfrac{\bar{\delta}_{x_0,\nu}^s(x-ty)}{(1+\vert y\vert^2)^{\frac{N+2s}{2}}}dy + K\int_{\vert y\vert\geq 1}\dfrac{\bar{\delta}_{x_0,\nu}^s(x-ty)}{(1+\vert y\vert^2)^{\frac{N+2s}{2}}}dy,\\
			&=& K\int_{\Rn}\dfrac{\bar{\delta}_{x_0,\nu}^s(x-ty)}{(1+\vert y\vert^2)^{\frac{N+2s}{2}}}dy,\\
			&=& K\cH_+^{x_0,\nu}(t,x).
		\eene
		Since $r$ is arbitrary, we get the desired result.
	\end{proof}
	
\section{Regularity up to the Dirichlet-Neumann interface}\label{Sect6}
		Note that the estimates in the following lemmas hold in a tubular neighbourhood of the interface set $\de\O$, the boundary of $\O$. We define
	\be\label{eq:Hplus}
		\Hplus(t,x):={h^+}(t,\d(x)),~~~\forall (t,x)\in(0,+\infty)\times\O.
	\ee
	In this section, we assume that for any $\bar{x}_0\in\O\cap B_{1/2}$, there exists a unique $x_0\in\de\O$ such that $\vert \bar{x}_0-x_0\vert=\d_\O(\bar{x}_0).$
	We have the following  result comparing $ \Hplus $ and $ \Hom$ in $\B_r^+(\bar{x}_0)$, where $r=\frac{1}{2}\vert \bar{x}_0-x_0\vert$, see \eqref{eq:ball}.
\begin{lemma}\label{l:estimates}

Let ${\bar x}_0\in\O\cap B_{1/2}$ and $x_0\in\partial\O$ be such that $\vert \bar{x}_0-x_0\vert=\delta_\O({\bar x}_0)$. Let $r:=\frac{\d_\O(\bar{x}_0)}{2}$. Then 
\be\label{eq:est1}
	\| \Hom-\H_{\O}^+ \|_{L^\infty(\B_r^+({\bar x}_0))} \leq C r^{2s},
\ee
\be\label{eq:est2}
	\left[\Hom-\H_{\O}^+\right]_{  C^{s-\e}( \B_r^+({\bar x}_0) )}\leq Cr^s
\ee
 and 
\be\label{eq:est3}
 	\left[\left(\Hom\right)^{-1}\right]_{  C^{s-\epsilon}( \B_r^+({\bar x}_0))}\leq C r^{-2s+\epsilon},
\ee
where the positive relabelled constant $C$ depends only on $N,~s,~\e$ and $\O$.
\end{lemma}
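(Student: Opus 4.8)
The plan is to reduce the three bounds to two ingredients: scaling and smoothness of the one–dimensional profile $h^+$ on the cone $\{0\le t\le\delta\}$, where it is smooth and strictly positive, and the $C^{1,1}$ regularity of the distance function $\d$. Fix $\bar{x}_0\in\O\cap B_{1/2}$, its (assumed unique) nearest boundary point $x_0\in\de\O$, and set $r=\d(\bar{x}_0)/2$, so that $B_r(\bar{x}_0)\subset\O$ and $\d(x)\in(r,3r)$ for every $x\in B_r(\bar{x}_0)$. Write $\delta_1(x):=(x-x_0)\cdot\nu(x_0)$ and $\delta_2:=\d$, so that $\Hom=h^+(t,\delta_1)$ and $\H_{\O}^+=h^+(t,\delta_2)$. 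Since $\bar{x}_0-x_0=\d(\bar{x}_0)\,\nu(x_0)$ and $\nabla\d(\bar{x}_0)=\nu(x_0)$, the affine map $\delta_1$ is precisely the first–order Taylor polynomial of $\d$ at $\bar{x}_0$; hence $g:=\delta_2-\delta_1$ satisfies $g(\bar{x}_0)=0$, $\nabla g(\bar{x}_0)=0$, and, $\nabla\d$ being Lipschitz in a tubular neighbourhood of $\de\O$ with constant depending only on $\O$,
\[
\|\nabla g\|_{L^\infty(B_r(\bar{x}_0))}\le C(\O)\,r,\qquad \|g\|_{L^\infty(B_r(\bar{x}_0))}\le C(\O)\,r^2 .
\]
Moreover $|\nabla\delta_1|\equiv1$ and $\delta_1(\bar{x}_0)=2r$, so $\delta_1(x)\in(r,3r)$ on $B_r(\bar{x}_0)$ as well.

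For the profile, write $h^+(t,\delta)=\rho^{\,s}\phi(\theta)$ with $\rho=\sqrt{t^2+\delta^2}$, $\theta=\arctan(t/\delta)$, where $\phi:[0,\pi)\to(0,\infty)$ is the smooth positive angular profile given explicitly in Section~\ref{Sect3}. Thus on the cone $\{0\le t\le\delta\}\setminus\{0\}$ one has $\theta\in[0,\pi/4]$, so $h^+$ is smooth and strictly positive there, and combining this with the $s$–homogeneity $h^+(\lambda t,\lambda\delta)=\lambda^{s}h^+(t,\delta)$ gives, for every $k\ge0$,
\[
|D^k h^+(t,\delta)|\le C_k\,\rho^{\,s-k}\qquad\text{and}\qquad h^+(t,\delta)\ge c\,\rho^{\,s}>0
\]
whenever $0\le t\le\delta$ and $\delta>0$, where $D^k$ is any partial derivative of order $k$ in $(t,\delta)$ and $c,C_k$ depend only on $N,s,k$. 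On $\B_r^+(\bar{x}_0)=[0,r)\times B_r(\bar{x}_0)$ one has $t<r<\delta_1,\delta_2<3r$, so these bounds apply with $\rho$ comparable to $r$.

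With these in hand the three estimates are short. For \eqref{eq:est1}, the mean value theorem in the second variable gives, on $\B_r^+(\bar{x}_0)$, $|h^+(t,\delta_1)-h^+(t,\delta_2)|\le\sup|\partial_\delta h^+|\cdot|g|\le Cr^{s-1}\cdot r^{2}=Cr^{\,s+1}$, and $r^{\,s+1}\le Cr^{2s}$ since $s<1$ and $r$ is bounded in terms of $\O$. For \eqref{eq:est2}, I would estimate $\nabla\big(h^+(t,\delta_1)-h^+(t,\delta_2)\big)$ pointwise on $\B_r^+(\bar{x}_0)$: its $t$–component $\partial_t h^+(t,\delta_1)-\partial_t h^+(t,\delta_2)$ has size $\le\sup|\partial_{t\delta}h^+|\cdot|g|\le Cr^{s-2}r^{2}=Cr^{s}$, while its $x$–component equals $\big(\partial_\delta h^+(t,\delta_1)-\partial_\delta h^+(t,\delta_2)\big)\nu(x_0)+\partial_\delta h^+(t,\delta_2)\big(\nu(x_0)-\nabla\d\big)$, of size $\le Cr^{s-2}r^{2}+Cr^{s-1}\cdot r=Cr^{s}$. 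Since $\B_r^+(\bar{x}_0)$ is convex with diameter comparable to $r$ and $h^+(t,\delta_i)$ is $C^1$ up to $\{t=0\}$ there, interpolating this gradient bound yields $\big[h^+(t,\delta_1)-h^+(t,\delta_2)\big]_{C^{s-\e}(\B_r^+(\bar{x}_0))}\le Cr^{s}\cdot r^{1-s+\e}\le Cr^{s}$. For \eqref{eq:est3}, the lower bound $h^+(t,\delta_1)\ge c\,r^{s}$ on $\B_r^+(\bar{x}_0)$ gives $\big|\nabla\big(h^+(t,\delta_1)^{-1}\big)\big|\le h^+(t,\delta_1)^{-2}\,|\nabla h^+(t,\delta_1)|\le Cr^{-2s}\cdot r^{s-1}=Cr^{-s-1}$, and the same convexity/interpolation step yields $\big[(\Hom)^{-1}\big]_{C^{s-\e}(\B_r^+(\bar{x}_0))}\le Cr^{-s-1}\cdot r^{1-s+\e}=Cr^{-2s+\e}$.

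The only genuine work is the second step: recording cleanly that $h^+$ is smooth and strictly positive, with the stated rescaled derivative bounds, on the cone $\{0\le t\le\delta\}\setminus\{0\}$ — which, given the explicit hypergeometric formula of Section~\ref{Sect3}, is essentially mechanical. The distance–function facts used ($\nabla\d(\bar{x}_0)=\nu(x_0)$ and the Lipschitz continuity of $\nabla\d$ near $\de\O$) are standard for $C^{1,1}$ domains, and everything else is just the mean value theorem together with the above bounds on $g$.
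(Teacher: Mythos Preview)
Your proof is correct and follows essentially the same approach as the paper: both rely on the explicit hypergeometric formula to get two-sided bounds $h^+\sim\rho^s$ and derivative bounds $|D^k h^+|\le C\rho^{s-k}$ on the cone $\{0\le t\le\delta\}$, together with the $C^{1,1}$ estimate $|\bar\delta-\delta|\le Cr^2$ and the mean value theorem. The only cosmetic difference is in \eqref{eq:est2}, where you bound the full gradient of the difference and then interpolate to get the $C^{s-\e}$ seminorm, while the paper estimates the H\"older quotient directly via a double integral in the Taylor parameters; your packaging via $s$-homogeneity is slightly cleaner but equivalent.
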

\begin{proof}
	To simplify the notations, we define
	$$
		\bar{\d}(x):=\bar{\d}_{x_0,\nu}(x)=\left[(x-x_0)\cdot\nu(x_0)\right]_+, \qquad \d(x):=\d_\O(x)
	$$
	and we recall
	\bdis
		h^+(t,\d)= C (t^2+\d^2)^{s/2}\cos^{2s}(\th/2)\,  {}_2F_1\left(0, 1;1-s; \frac{1-\cos(\th)}{2}\right),
	\edis
	where $\th=\textrm{arctan}(\frac{t}{\d(x)})\in(0,\pi/2)$. Since $(t,x)\in\B_r^+({\bar x}_0)\subset[0,r)\times\O$, then $\th/2\in(0,\pi/4)$ and thus $\frac{\sqrt{2}}{2}<\cos(\th/2)<1$.\\
	On the other hand, we have that $1>\frac{1-\cos(\th)}{2}\to 0$ as $\th\to 0$. Hence by \eqref{eq:power-series},
	$$
	{}_2F_1\left(0, 1;1-s; \frac{1-\cos(\th)}{2}\right)=a_0+O(1-\cos\th),\qquad 1-\cos(\th)\to 0 \textrm{ as } \th\to 0.
	$$
	Therefore for $(t,x)\in\B_r^+({\bar x}_0)$, there exist two positive constants $C_1\leq C_2$ such that
	\bdis
		C_1 \left(t^2+\d^2(x)\right)^{s/2} \leq h^+(t,\d(x)) \leq C_2 \left(t^2+\d^2(x)\right)^{s/2}
	\edis
	and consequently
	\be\label{eq:gradhplus}
		\sup_{(t,x)\in\B_r^+({\bar x}_0)}\vert\nabla h^+(t,\d(x))\vert\leq Cr^{s-1}.
	\ee
	Note that for $x\in B_r({\bar x}_0)$, we have
	\bdis
		\vert \bar{\d}(x)-\d(x)\vert\leq Cr^2,
	\edis
	since $\de\O$ is $C^{1,1}$.\\
	Now, to prove \eqref{eq:est1}, we write
	\bene
		\left\lvert \Hom(t,x)-\H_{\O}^+(t,x)\right\rvert &=& \left\lvert h^+(t,\bar{\d}(x))-h^+(t,\d(x))\right\rvert,\\
		&=& \left\lvert\int_0^1\pd{h^+}{\tau}(t,\tau\bar{\d}(x)+(1-\tau)\d(x))d\tau (\bar{\d}(x)-\d(x))\right\rvert,\\
		& \leq & \sup_{(t,x)\in\B_r^+({\bar x}_0)}\vert\nabla h^+(t,\d(x))\vert \vert \bar{\d}(x)-\d(x)\vert,\\
		& \leq & Cr^{s-1}r^2=Cr^{1+s}\leq Cr^{2s}.
	\eene
	To see \eqref{eq:est2}, we define
	\bdis
		G_+(z):=\Hom(z)-\H_{\O}^+(z), \quad\textrm{ for $z=(t,x)\in\B_r^+({\bar x}_0)$}
	\edis
	and for $\tau\in(0,1)$,
	\bdis
		g^+_{\tau}(t,x)=\pd{h^+}{\tau}(t,\tau\bar{\d}(x)+(1-\tau)\d(x)).
	\edis
	Let $z_1,z_2\in\B_r^+({\bar x}_0)$, we have
	\bene
		\dfrac{\vert G_+(z_1)-G_+(z_2)\vert}{\vert z_1-z_2\vert^{s-\epsilon}} &=& \frac{1}{\vert z_1-z_2\vert^{s-\epsilon}}\left\lvert \int_0^1g^+_{\tau}(t_1,x_1)d\tau\left(\bar{\d}(x_1)-\d(x_1)\right)\right.\\
		& - & \left.\int_0^1g^+_{\tau}(t_2,x_2)d\tau\left(\bar{\d}(x_2)-\d(x_2)\right)\right\rvert,\\
		& \leq & \frac{1}{\vert z_1-z_2\vert^{s-\epsilon}}\vert\bar{\d}(x_1)-\d(x_1)\vert \left\lvert\int_0^1 g^+_{\tau}(z_1)-g^+_{\tau}(z_2)d\tau \right\rvert\\
		& + & \left(\vert \bar{\d}(x_1)-\d(x_1)\vert+\vert \bar{\d}(x_2)-\d(x_2)\vert\right)\frac{1}{\vert z_1-z_2\vert^{s-\epsilon}}\left\lvert\int_0^1g^+_{\tau}(z_2)d\tau \right\rvert,\\
		& \leq & Cr^{-s+\epsilon}r^2r\left\lvert\int_0^1\int_0^1 \pd{g^+_{\tau}}{\rho}(\rho z_1-(1-\rho)z_2)d\rho d\tau\right\rvert\\
		& + & \left\lvert C(r^2+r^2)r^{-s+\epsilon}\int_0^1g^+_{\tau}(z_2)d\tau \right\rvert,\\
		& \leq & Cr^{3-s+\epsilon}r^{s-2}+Cr^{2-s+\epsilon}r^{s-1},\\
		& \leq & Cr^{1+\epsilon}\leq Cr^s.
	\eene
	Finally, we prove \eqref{eq:est3}. We have, for $z_1,z_2\in\B_r^+({\bar x}_0)$,
	\bene
		\dfrac{\left\lvert\dfrac{1}{\Hom(z_1)}-\dfrac{1}{\Hom(z_2)}\right\rvert}{\vert z_1-z_2\vert^{s-\epsilon}}&=& \dfrac{\left\lvert\Hom(z_1)-\Hom(z_2)\right\rvert}{\vert z_1-z_2\vert^{s-\epsilon}}\dfrac{1}{\Hom(z_1)\Hom(z_2)},\\
		& \leq & Cr^sr^{-s+\e}r^{-2s} \leq Cr^{\epsilon-2s},
	\eene
	where we used the fact that $\frac{\sqrt{2}}{2}<\cos(\th/2)<1$ and ${}_2F_1\left(0, 1;1-s; \frac{1-\cos(\th)}{2}\right)\simeq a_0>0$ in $\B^+_r({\bar x}_0)$.
\end{proof}

\begin{lemma}\label{l:estimates2}

Let ${\bar x}_0\in\O\cap B_{1/2}$ and $x_0\in\partial\O$ as in Lemma \ref{l:estimates}. Then there exists a positive universal constant $C$ such that
\be\label{eq:est21}
	\| d^s-\H_{\O}^+ \|_{L^\infty(\B_r^+({\bar x}_0))} \leq C r^{2s},
\ee

\be\label{eq:est22}
	\left[ d^s-\H_{\O}^+\right]_{  C^{s-\e}( \B_r^+({\bar x}_0) )}\leq C r^s
\ee
and

\be\label{eq:est23}
 	\left[ d^{-s} \right]_{C^{s-\e}( \B_r^+(\bar{x}_0))}\leq C r^{-2s+\e},
\ee
where $d(t,x)=d_\O(t,x)=\left(t^2+\d_\O^2(x)\right)^{1/2}$ and the positive constant $C$ depends only on $N,~s,~\e$ and $\O$.
\end{lemma}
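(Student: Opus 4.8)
The plan is to follow the scheme of Lemma~\ref{l:estimates}, the new difficulty being that $\Hplus(t,x)=h^+(t,\d_\O(x))$ and $d^s=(t^2+\d_\O^2(x))^{s/2}$ are now \emph{different} functions rather than two evaluations of $h^+$, so the comparison must exploit the explicit form of $h^+$ from Section~\ref{Sect3}. Writing $\d:=\d_\O$, $\rho:=(t^2+\d^2)^{1/2}$ and $\th:=\arctan(t/\d)$, Section~\ref{Sect3} gives $h^+(t,\d)=\rho^{\,s}\,\Phi(\th)$ with $\Phi(\th):=C\cos^{2s}(\th/2)\,{}_2F_1\!\left(0,1;1-s;\tfrac{1-\cos\th}{2}\right)$. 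By \eqref{eq:power-series} $\Phi$ is $C^\infty$ near $\th=0$ and, being a function of $\cos(\th/2)$ and $\cos\th$, even there; since $h^+(0,\d)=\d_+^s$ we get $\Phi(0)=1$, whence also $\Phi'(0)=0$. Equivalently --- and this is where the Neumann condition enters --- the even reflection of $h^+$ in $t$ solves $\div(|t|^{1-2s}\nabla\,\cdot\,)=0$ across $\{t=0\}$, hence is smooth in $x$ and in $t^2$ near the smooth part of the interface, so $h^+(t,\d)=\d^s+c\,\d^{\,s-2}t^2+O(\d^{\,s-4}t^4)$; since $(t^2+\d^2)^{s/2}=\d^s+\tfrac{s}{2}\,\d^{\,s-2}t^2+O(\d^{\,s-4}t^4)$ as well, one obtains, on any region where $t/\d$ stays bounded,
\[
|d^s-\Hplus|\le C\,\d^{\,s-2}t^2,\qquad |\nabla(d^s-\Hplus)|\le C\,\d^{\,s-1},
\]
the second inequality using in addition the gradient bound \eqref{eq:gradhplus} for $h^+$.

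Next I would record the elementary geometry on $\B_r^+(\bar x_0)$ with $r=\d_\O(\bar x_0)/2$, exactly as in the proof of Lemma~\ref{l:estimates}: since $\dist(\cdot,\Rn\setminus\O)$ is $1$-Lipschitz, $\d_\O(x)\in(r,3r)$ for $x\in B_r(\bar x_0)$, hence $\rho=d(t,x)\asymp r$ on $\B_r^+(\bar x_0)$ and $\th\le t/\d<1$. Plugging $\d\asymp r$ and $t<r$ into the two displayed inequalities gives the $L^\infty$ bound \eqref{eq:est21} for $d^s-\Hplus$, together with $\Vert\nabla(d^s-\Hplus)\Vert_{L^\infty(\B_r^+(\bar x_0))}\le Cr^{\,s-1}$. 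Moreover, $d$ being a distance function on $\R^{N+1}$ we have $|\nabla d|=1$ a.e., so $\Vert d^{-s}\Vert_{L^\infty(\B_r^+(\bar x_0))}\le r^{-s}$ and $\Vert\nabla d^{-s}\Vert_{L^\infty(\B_r^+(\bar x_0))}\le C\,d^{-s-1}\le Cr^{-s-1}$.

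For the H\"older seminorms \eqref{eq:est22} and \eqref{eq:est23} I would invoke the elementary interpolation inequality $[g]_{C^{s-\e}(B)}\le C\,\Vert g\Vert_{L^\infty(B)}^{\,1-(s-\e)}\,\Vert\nabla g\Vert_{L^\infty(B)}^{\,s-\e}$, valid on a ball $B$ of radius at most $r$ and proved by splitting the difference quotient $|g(z_1)-g(z_2)|/|z_1-z_2|^{\,s-\e}$ according to whether $|z_1-z_2|$ is below or above $r$. Applied to $g=d^{-s}$, the two bounds from the previous paragraph combine with exponent $-s\bigl(1-(s-\e)\bigr)+(-s-1)(s-\e)=-2s+\e$, which is \eqref{eq:est23}; applied to $g=d^s-\Hplus$ they give \eqref{eq:est22}.

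The step I expect to be the real obstacle is the $L^\infty$ comparison \eqref{eq:est21}: unlike in Lemma~\ref{l:estimates}, where both functions were values of $h^+$ at nearby points and a mean value estimate sufficed, here size bounds alone only show that $h^+(t,\d)$ and $(t^2+\d^2)^{s/2}$ are \emph{comparable}. One must use the exact structure of $h^+$ --- its homogeneity of degree $s$ together with $\Phi(0)=1$ and $\Phi'(0)=0$, equivalently the vanishing of the $t$-linear term forced by the Neumann condition --- to see that both $d^s$ and $\Hplus$ equal the common profile $\d^s$ up to $O(\d^{\,s-2}t^2)$. This is the one point at which the explicit Hypergeometric representation of $h^+$, and not merely the gradient bound \eqref{eq:gradhplus} used in Lemma~\ref{l:estimates}, is indispensable; the remaining ingredients (the geometry of $\B_r^+(\bar x_0)$ and the interpolation inequality for the H\"older seminorms) are routine and parallel to Lemma~\ref{l:estimates}.
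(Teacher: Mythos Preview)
Your Taylor-expansion argument is sound, but there is an arithmetic slip in the final substitution: on $\B_r^+(\bar x_0)$ you have $\delta\asymp r$ and $t<r$, so your pointwise bound $|d^s-\Hplus|\le C\,\delta^{\,s-2}t^2$ gives only $Cr^{s-2}r^2=Cr^{s}$, \emph{not} the $Cr^{2s}$ claimed in \eqref{eq:est21}. In fact $r^s$ is sharp: both $d^s=(t^2+\delta^2)^{s/2}$ and $h^+(t,\delta)$ are homogeneous of degree~$s$ in $(t,\delta)$, so their difference $\rho^s(1-\Phi(\theta))$ is as well, and since $\Phi(\theta)=\cos^{2s}(\theta/2)$ is not identically~$1$ on $(0,\pi/4)$, the supremum over $\B_r^+(\bar x_0)$ (where $\theta$ ranges up to $\pi/4$) is genuinely of order $r^s$. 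The same issue cascades to \eqref{eq:est22}: your interpolation with $\|g\|_\infty\lesssim r^{2s}$ and $\|\nabla g\|_\infty\lesssim r^{s-1}$ gives exponent $2s-(s+1)(s-\e)$, which falls short of $s$ for small $\e$; using the correct $\|g\|_\infty\lesssim r^s$ one gets only $r^\e$. Your treatment of \eqref{eq:est23} is correct and matches the paper's direct mean-value computation.

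The paper takes a different route for \eqref{eq:est21}--\eqref{eq:est22}: it inserts the flat profile $\bar\delta(x)=[(x-x_0)\cdot\nu(x_0)]_+$ and splits $d^s-\Hplus$ as $I+II$ with $I=d_+^s(t,\delta)-d_+^s(t,\bar\delta)$ and $II=d_+^s(t,\bar\delta)-h^+(t,\delta)$. The term $I$ is handled exactly as in Lemma~\ref{l:estimates} (mean-value theorem plus $|\delta-\bar\delta|\le Cr^2$, yielding $r^{s+1}\le r^{2s}$); for $II$ the paper appeals only to the two-sided comparability $C_1 d_+^s\le h^+\le C_2 d_+^s$. But, as you yourself anticipate in your final paragraph, comparability of two degree-$s$ homogeneous functions gives only an $r^s$ bound on their difference, not $r^{2s}$. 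So the obstruction you single out is real: neither your explicit expansion nor the paper's decomposition reaches the stated exponents in \eqref{eq:est21}--\eqref{eq:est22}, because the $s$-homogeneity of $d^s-\Hplus$ makes $r^s$ the best possible order.
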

\begin{proof}
	Define $d_+(t,\d(x)):=d(t,x)=\left(t^2+\d^2(x)\right)^{1/2}.$ Hence, we have that
	\bene
		\left\lvert d^s(t,x)-\H_{\O}^+(t,x)\right\rvert &=& \left\lvert d_+^s(t,\d(x))-h^+(t,\d(x))\right\rvert,\\
		& \leq & \left\lvert d_+^s(t,\d(x))-d_+^s(t,\bar{\d}(x))\right\rvert + \left\lvert d_+^s(t,\bar{\d}(x)) - h^+(t,\d(x))\right\rvert,\\
		& = & I~~~ + ~~~II.
	\eene
	To prove estimates \eqref{eq:est21} and \eqref{eq:est22} for the quantities $I$ and $II$, we use the same argument as in Lemma \ref{l:estimates} by remarking that for $I$
	\bdis
		\left\lvert \nabla d_+^s(t,\d)\right\rvert\leq r^{s-1}.
	\edis
	For the quantity $II$, we note that there are two positive constants $C_1$ and $C_2$ such that
	\bdis
		C_1 d_+^s(t,\bar{\d}) \leq h^+(t,\bar{\d}) \leq C_2 d_+^s(t,\bar{\d}),
	\edis
	similarly as in the proof of Lemma \ref{l:estimates}, see also the definition of $h^+$ in Section \ref{Sect3}.\\	
	For \eqref{eq:est23}, recall first that for $z_1,z_2\in\B_r^+(\bar{x_0})$,
	$$
		\vert z_1-z_2\vert\leq Cr,\quad d^{-s}(z_1)\leq Cr^{-s} \textrm{ and } \sup_{z\in\B^+_r(\bar{x_0})}\vert\nabla d^s(z)\vert\leq Cr^{s-1}.
	$$
	We have
	\bdis
		\frac{1}{d^s}(z_1)-\frac{1}{d^s}(z_2)=\left(d^s(z_2)-d^s(z_1)\right)d^{-s}(z_1)d^{-s}(z_2).
	\edis
	Then, we obtain
	\bene
		\dfrac{\vert d^{-s}(z_1)-d^{-s}(z_2)\vert}{\vert z_1-z_2\vert^{s-\epsilon}} &=& \left\lvert\int_0^1\nabla_\tau d^s(\tau z_1+(1-\tau)z_2)d\tau\right\rvert \frac{\vert z_1-z_2\vert}{\vert z_1-z_2\vert^{s-\epsilon}}d^{-s}(z_1)d^{-s}(z_2),\\
		& \leq & C\sup_{z\in\B^+_r(\bar{x_0})}\vert\nabla d^s(z)\vert r^{1-s+\epsilon}r^{-2s},\\
		& \leq & Cr^{s-1}r^{1-3s+\e}=Cr^{-2s+\e},
	\eene
	up to relabeling the positive constant $C$ that depends only on $N$, $s$, $\e$ and $\O$.
\end{proof}

\begin{lemma}\label{l:sharpregul}
Let ${\bar x}_0\in\O\cap B_{1/2}$ and $x_0\in\partial\O$ be the unique point such that $2r:=\vert x_0-{\bar x}_0\vert=\d({\bar x}_0)$. Assume that
	\be\label{eq:linfbound}
		\Vert W-Q(x_0)\Hom\Vert_{L^\infty(\B_{2r}^+(x_0))}\leq Cr^{2s-\epsilon},\qquad \textrm{ with } \vert Q(x_0)\vert\leq C.
	\ee
Then
	$$
		\left[ W-Q(x_0)\Hom\right]_{C^{s-\epsilon}(\B_r^+({\bar x}_0))}\leq CC_0r^s,
	$$
	for some constant $C>0$ depending only on $N$, $s$ and $\e$.
\end{lemma}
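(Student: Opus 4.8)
The plan is to reduce the estimate to the Neumann regularity result of Theorem~\ref{t:interegul} by a rescaling, exploiting that at the scale $r$ the Dirichlet portion of the boundary is invisible. Write $\widetilde{W}:=W-Q(x_0)\Hom$. Since $\delta_\O(\bar{x}_0)=2r$ we have $B_{2r}(\bar{x}_0)\subset\O$, and since $x_0$ is the (unique) nearest point of $\partial\O$ to $\bar{x}_0$ we have $\bar{x}_0-x_0=2r\,\nu(x_0)$, whence $(x-x_0)\cdot\nu(x_0)=(x-\bar{x}_0)\cdot\nu(x_0)+2r>0$ for every $x\in B_{2r}(\bar{x}_0)$. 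Therefore, by \eqref{eq:pbm1} (for $W$) and \eqref{eq:cH-solves} (for $\Hom$), both $W$ and $\Hom$ are $M_s$-harmonic in $\B_{2r}^+(\bar{x}_0)$ with respective Neumann traces $f$ and $0$ on $B_{2r}(\bar{x}_0)$; hence $\widetilde{W}\in H^1(t^{1-2s};\B_{2r}^+(\bar{x}_0))$ solves
\[
M_s\widetilde{W}=0 \ \text{ in } \B_{2r}^+(\bar{x}_0),\qquad \lim_{t\to0}N_s\widetilde{W}(t,\cdot)=f \ \text{ on } B_{2r}(\bar{x}_0),
\]
a pure Neumann problem of exactly the type handled by Theorem~\ref{t:interegul}.

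Next I would rescale to unit size: put $V(z):=r^{-(2s-\epsilon)}\widetilde{W}(\bar{x}_0+rz)$ for $z\in\B_2^+$. By the scaling identities $M_s\big[U(r\,\cdot)\big]=r^{2s+1}(M_sU)(r\,\cdot)$ and $N_s\big[U(r\,\cdot)\big]=r^{2s}(N_sU)(r\,\cdot)$ (already used in the proof of Proposition~\ref{p:pbm}), $V$ satisfies $M_sV=0$ in $\B_2^+$ and $\lim_{t\to0}N_sV(t,x)=r^{\epsilon}f(\bar{x}_0+rx)$ on $B_2$. Set $C_0:=\|W\|_{L^\infty(\RN)}+\|f\|_{L^\infty(\Rn)}$. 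By \eqref{eq:linfbound} — which holds, up to a fixed factor, also on $\B_{2r}^+(\bar{x}_0)\subset\B_{4r}^+(x_0)$ — we get $\|V\|_{L^\infty(\B_2^+)}=r^{-(2s-\epsilon)}\|\widetilde{W}\|_{L^\infty(\B_{2r}^+(\bar{x}_0))}\le CC_0$, while trivially $\|r^{\epsilon}f(\bar{x}_0+r\,\cdot)\|_{L^\infty(B_2)}=r^{\epsilon}\|f\|_{L^\infty(\Rn)}\le C_0$ (the factor $r^{\epsilon}\le1$ being harmless, as we may assume $r\le1$).

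Now Theorem~\ref{t:interegul} applies to $V$, giving $V\in C^{2s-\epsilon}(\ov{\B_1^+})$ with
\[
\|V\|_{C^{2s-\epsilon}(\ov{\B_1^+})}\le C\big(\|V\|_{L^\infty(\B_2^+)}+\|r^{\epsilon}f(\bar{x}_0+r\,\cdot)\|_{L^\infty(B_2)}\big)\le CC_0;
\]
here one invokes the localized form of Theorem~\ref{t:interegul}, with $\|V\|_{L^\infty(\RN)}$ replaced by $\|V\|_{L^\infty(\B_2^+)}$, which is what its proof actually delivers since the Poisson-kernel comparison function used there depends only on the localized datum $\eta f$. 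Since $0<s-\epsilon<2s-\epsilon$ and $\B_1^+$ is bounded, this yields in particular $[V]_{C^{s-\epsilon}(\ov{\B_1^+})}\le C\|V\|_{C^{2s-\epsilon}(\ov{\B_1^+})}\le CC_0$. Undoing the rescaling — with $w_i=\bar{x}_0+rz_i$, so $|z_1-z_2|=r^{-1}|w_1-w_2|$ — one computes $[V]_{C^{s-\epsilon}(\ov{\B_1^+})}=r^{-(2s-\epsilon)+(s-\epsilon)}[\widetilde{W}]_{C^{s-\epsilon}(\ov{\B_r^+(\bar{x}_0)})}=r^{-s}[\widetilde{W}]_{C^{s-\epsilon}(\ov{\B_r^+(\bar{x}_0)})}$, hence
\[
\big[W-Q(x_0)\Hom\big]_{C^{s-\epsilon}(\B_r^+(\bar{x}_0))}=r^{s}\,[V]_{C^{s-\epsilon}(\ov{\B_1^+})}\le CC_0\,r^{s},
\]
which is the assertion.

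The argument is essentially routine once the geometric reduction in the first step is in hand; the only delicate point is the bookkeeping between the two nested scales — namely checking that on $\B_{2r}^+(\bar{x}_0)$ the Dirichlet set $\Rn\setminus\O$ (equivalently $\{(x-x_0)\cdot\nu(x_0)\le0\}$) genuinely plays no role, and that the bound \eqref{eq:linfbound}, stated on a ball centered at $x_0$, transfers to the ball centered at $\bar{x}_0$ at the cost of a fixed dilation of the radius. With these checks, all constants depend only on $N$, $s$ and $\epsilon$, as claimed.
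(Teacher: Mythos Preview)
Your proof is correct and follows essentially the same route as the paper: rescale $W-Q(x_0)\Hom$ by $r$, apply the Neumann regularity estimate of Theorem~\ref{t:interegul}, and scale back. The only cosmetic differences are that the paper centers the rescaling at $x_0$ and normalizes by $r^{-s}$ (so that $\|V_r\|_{L^\infty}\le Cr^{s-\epsilon}$ and the seminorm computation reads $r^{s}r^{-s+\epsilon}[V_r]_{C^{s-\epsilon}}\le Cr^s$), whereas you center at $\bar{x}_0$ and normalize by $r^{-(2s-\epsilon)}$; your explicit verification that $B_{2r}(\bar{x}_0)\subset\O\cap\{(x-x_0)\cdot\nu(x_0)>0\}$ and your remark about the localized form of Theorem~\ref{t:interegul} make the geometry cleaner.
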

\begin{proof}
		Set 
		\bdis
			V_r(z):=\dfrac{W(x_0+rz)-Q\Hom(x_0+rz) }{r^s}.
		\edis
		Since $[0,r]\times (x_0+rB_2)\subset \B_{4}^+$, by \eqref{eq:linfbound}, we have that
		\bdis
			\Vert V_r\Vert_{L^\infty(\B_{2}^+)({\bar x}_0)} \leq Cr^{s-\epsilon}.
		\edis
		Furthermore, we have
		\bdis
			M_sV_r=0 \qquad \textrm{ in  } \B_2^+({\bar x}_0)
		\edis
		and
		\bdis
			\lim_{t\rightarrow0}N_sV_r(t,x)=r^sf(x_0+rx),\quad x\in B_2({\bar x}_0).
		\edis
		Then, by Theorem \ref{t:interegul},
		\bdis
			\left[ V_r\right]_{C^{s-\epsilon}(\B_1^+({\bar x}_0))}\leq Cr^{s-\epsilon}.
		\edis
		Therefore, we infer that
		$$
			\left[ W-Q\Hom\right]_{C^{s-\epsilon}(\B_r^+({\bar x}_0))}=r^sr^{-s+\epsilon}\left[V_r\right]_{C^{s-\epsilon}(\B_1^+({\bar x}_0))}\leq Cr^sr^{-s+\epsilon}r^{s-\epsilon}=Cr^s,
		$$
		as desired.\\
	\end{proof}

We now prove the following result.
	\begin{proposition}\label{p:II}
		Let $\O\subset\Rn$ be a bounded domain of class $C^{1,1}$ and $W$ satisfy equation \eqref{eq:pbm1}. Then $\frac{W}{{\cH_{\O}^+}}\in C^{s-\epsilon}(\overline{\B_r^+}({\bar x}_0))$ for ${\bar x}_0$ and $r$ as in Lemma \ref{l:sharpregul}. Moreover, we have the following estimate
		\bdis
			\left[\dfrac{W}{\cH_{\O}^+}\right]_{C^{s-\epsilon}(\overline{\B_r^+}({\bar x}_0))}\leq C\left(\parallel W\parallel_{L^\infty(\RN)} + \parallel f\parallel_{L^\infty(\Rn)}\right),
		\edis
		where the positive constant $C$ depends only on $N$, $s,~\e$ and $\O$.
	\end{proposition}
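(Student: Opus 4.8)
The plan is to deduce the seminorm bound by combining the one--sided Taylor expansion \eqref{eq:Taylor-expand-near-de-Om} obtained in Proposition \ref{p:pbm}, the comparison estimates of Lemma \ref{l:estimates}, and the $C^{s-\e}$ gain on the Neumann part of the boundary furnished by Lemma \ref{l:sharpregul}. Write $C_0:=\parallel W\parallel_{L^\infty(\RN)}+\parallel f\parallel_{L^\infty(\Rn)}$ and let ${\bar x}_0\in\O\cap B_{1/2}$, $x_0\in\de\O$, $r=\frac12\d_\O({\bar x}_0)$ and $Q(x_0)$ (as in \eqref{eq:px0w}) be as in Lemma \ref{l:sharpregul}. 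After localising and flattening $\de\O$ near $x_0$ so as to be in the model configuration of Definition \ref{d:fix-domain} (a standard reduction for $C^{1,1}$ domains), \eqref{eq:Taylor-expand-near-de-Om} gives on balls centred at $x_0$ the estimate $\parallel W-Q(x_0)\Hom\parallel_{L^\infty(\B_\rho^+(x_0))}\le CC_0\,\rho^{2s-\e}$ together with $|Q(x_0)|\le CC_0$. Taking $\rho=2r$ furnishes hypothesis \eqref{eq:linfbound} of Lemma \ref{l:sharpregul}, while $\B_r^+({\bar x}_0)\subset\B_{\sqrt{10}\,r}(x_0)$ gives $\parallel W-Q(x_0)\Hom\parallel_{L^\infty(\B_r^+({\bar x}_0))}\le CC_0\,r^{2s-\e}$; hence Lemma \ref{l:sharpregul} yields
\bdis
	\left[\,W-Q(x_0)\Hom\,\right]_{C^{s-\e}(\B_r^+({\bar x}_0))}\le CC_0\,r^{s}.
\edis

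Next I would transfer these two bounds from $\Hom$ to $\Hplus=h^+(t,\d_\O(x))$ and, separately, control $(\Hplus)^{-1}$ on $\B_r^+({\bar x}_0)$. Writing $W-Q(x_0)\Hplus=(W-Q(x_0)\Hom)+Q(x_0)(\Hom-\Hplus)$ and using \eqref{eq:est1}--\eqref{eq:est2} of Lemma \ref{l:estimates} together with $|Q(x_0)|\le CC_0$ gives $\parallel W-Q(x_0)\Hplus\parallel_{L^\infty(\B_r^+({\bar x}_0))}\le CC_0\,r^{2s-\e}$ and $\left[W-Q(x_0)\Hplus\right]_{C^{s-\e}(\B_r^+({\bar x}_0))}\le CC_0\,r^{s}$. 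For the reciprocal: on $\B_r^+({\bar x}_0)$ the Lipschitz property of $\d_\O$ gives $\d_\O(x)\ge\d_\O({\bar x}_0)-|x-{\bar x}_0|\ge r$, hence $d\ge r$, and since $C_1 d^s\le\Hplus\le C_2 d^s$ there (as established in the proof of Lemma \ref{l:estimates}) we obtain $\parallel(\Hplus)^{-1}\parallel_{L^\infty(\B_r^+({\bar x}_0))}\le Cr^{-s}$; moreover, running the computation of \eqref{eq:est3} verbatim with $\d_\O$ in place of $\bar\d_{x_0,\nu}$ — which is legitimate since $\d_\O\in C^{1,1}$ in a tubular neighbourhood of $\de\O$ and $|\nabla\Hplus|\le Cr^{s-1}$ by \eqref{eq:gradhplus} — yields $\left[(\Hplus)^{-1}\right]_{C^{s-\e}(\B_r^+({\bar x}_0))}\le Cr^{-2s+\e}$.

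Finally, from $\dfrac{W}{\Hplus}-Q(x_0)=(W-Q(x_0)\Hplus)\,(\Hplus)^{-1}$ and the elementary product inequality $[fg]_{C^{s-\e}}\le\parallel f\parallel_{L^\infty}[g]_{C^{s-\e}}+\parallel g\parallel_{L^\infty}[f]_{C^{s-\e}}$ applied with $f=W-Q(x_0)\Hplus$ and $g=(\Hplus)^{-1}$, the four bounds above combine to
\bdis
	\left[\dfrac{W}{\Hplus}\right]_{C^{s-\e}(\B_r^+({\bar x}_0))}\le CC_0\,r^{2s-\e}\,r^{-2s+\e}+Cr^{-s}\,CC_0\,r^{s}=CC_0,
\edis
which is the asserted estimate; since $W\in L^\infty$ and $\Hplus\ge Cr^s>0$ on $\B_r^+({\bar x}_0)$, the quotient $W/\Hplus$ is bounded there and extends to an element of $C^{s-\e}(\overline{\B_r^+}({\bar x}_0))$. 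The only place requiring genuine care is that each of the four factors in this scheme degenerates (or blows up) like a fixed power of $r=\frac12\d_\O({\bar x}_0)$ as ${\bar x}_0$ approaches $\de\O$, and the argument closes precisely because these powers cancel exactly in the product; establishing the sharp exponent $-2s+\e$ for $[(\Hplus)^{-1}]_{C^{s-\e}}$, where the $C^{1,1}$ regularity of $\d_\O$ enters, is the delicate ingredient, whereas the initial flattening step needed to invoke Proposition \ref{p:pbm} is routine.
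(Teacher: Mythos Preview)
Your proof is correct and follows essentially the same strategy as the paper: obtain $L^\infty$ and $C^{s-\e}$ bounds for $W-Q(x_0)\Hplus$ on $\B_r^+({\bar x}_0)$ by combining Proposition \ref{p:pbm}, Lemma \ref{l:sharpregul}, and Lemma \ref{l:estimates}, then divide by $\Hplus$ using pointwise lower bounds and a $C^{s-\e}$ bound for $(\Hplus)^{-1}$. The paper writes out the difference $\frac{W}{\Hplus}(z_1)-\frac{W}{\Hplus}(z_2)$ via an explicit telescoping identity rather than invoking the product inequality $[fg]_{C^{s-\e}}\le\|f\|_{L^\infty}[g]_{C^{s-\e}}+\|g\|_{L^\infty}[f]_{C^{s-\e}}$, but the two arguments are equivalent; if anything, your version is slightly more careful in noting that the seminorm bound for $(\Hplus)^{-1}$ must be justified separately (the paper cites \eqref{eq:est3}, which is stated for $(\Hom)^{-1}$). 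One cosmetic remark: no actual ``flattening'' is required to invoke Proposition \ref{p:pbm}, only translation and scaling so that a local piece of $\de\O$ lies in the class $\cG$ of Definition \ref{d:fix-domain}.
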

	\begin{proof}
		By \eqref{eq:est1} and Proposition \ref{p:pbm}, we have that
		\be\label{eq:sharpuniform}
			\parallel W-Q(x_0)\cH_{\O}^+\parallel_{L^\infty(\B_{2r}^+({\bar x}_0))}\leq Cr^{2s-\epsilon}.
		\ee
		Lemma \ref{l:sharpregul} and \eqref{eq:est2} yield
		\be\label{eq:sharpreg}
			\left[ W-Q(x_0)\cH_{\O}^+\right]_{C^{s-\e}(\B_{2r}^+({\bar x}_0))}\leq Cr^s,
		\ee
		for $r$ as in Lemma \ref{l:sharpregul}. Now, for $z_1,z_2\in \B_{2r}^+({\bar x}_0)\supset\ov{\B_r^+}({\bar x}_0)$, we have
		\bene
			\dfrac{W}{\cH_{\O}^+}(z_1)-\dfrac{W}{\cH_{\O}^+}(z_2) &=& \dfrac{\left(W-Q(x_0)\cH_{\O}^+\right)(z_1)-\left(W-Q(x_0)\cH_{\O}^+\right)(z_2)}{\cH_{\O}^+(z_1)}\\
			&+& \left(W-Q(x_0)\cH_{\O}^+\right)(z_2)\left[\left(\cH_{\O}^+\right)^{-1}(z_1)-\left(\cH_{\O}^+\right)^{-1}(z_2)\right].
		\eene		
		On one hand, using \eqref{eq:sharpreg}, we obtain
		\ben\label{eq:WH1}
			\left\vert \dfrac{\left(W-Q(x_0)\cH_{\O}^+\right)(z_1)-\left(W-Q(x_0)\cH_{\O}^+\right)(z_2)}{\cH_{\O}^+(z_1)}\right\vert & \leq & Cr^s\left(\cH_{\O}^+\right)^{-1}(z_1)\vert z_1-z_2\vert^{s-\e}\nonumber,\\
			& \leq & C\vert z_1-z_2\vert^{s-\e},
		\een
		by noting that $\cH_{\O}^+ \sim r^s$ in $\B_{2r}^+({\bar x}_0)$, up to relabeling the positive constant $C$.\\
		On the other hand, by \eqref{eq:sharpuniform} and \eqref{eq:est3}, we infer that
		\ben\label{eq:WH2}
			\left\lvert\left(W-Q(x_0)\cH_{\O}^+\right)(z_2)\right\rvert\left\lvert\left(\cH_{\O}^+\right)^{-1}(z_1)-\left(\cH_{\O}^+\right)^{-1}(z_2)\right\rvert & \leq & Cr^{2s-\e}\left\lvert\left(\cH_{\O}^+\right)^{-1}(z_1)-\left(\cH_{\O}^+\right)^{-1}(z_2)\right\rvert\nonumber,\\
			& \leq & Cr^{2s-\e}r^{-2s+\e}\vert z_1-z_2\vert^{s-\e}\nonumber,\\
			&=& C\vert z_1-z_2\vert^{s-\e}.
		\een
		Therefore, by \eqref{eq:WH1} and \eqref{eq:WH2},
		\bdis
			\left[\dfrac{W}{\cH_{\O}^+}\right]_{C^{s-\e}(\ov{\B_r^+}({\bar x}_0))}\leq C.
		\edis
	\end{proof}
	
	\section{Proof of Theorem \ref{t:I}}\label{Section7}
	
		\textbf{Regularity of Set :}
				Let $k\in\N$ and $\alpha\in(0,1]$. A set $\Omega\subset\Rn$ is of class $C^{k,\alpha}$ if there exists $M>0$ such that for any $x_0\in\partial\Omega$, there exist a ball $B=B_r(x_0),r>0$ and an isomorphism $\phi:Q\longrightarrow B$ such that :
				\medskip
				
				\begin{displaymath}
					\phi\in C^{k,\alpha}(\ov{Q}),\quad \phi^{-1}\in C^{k,\alpha}(\ov{B}),\quad
					\phi(Q_+)=B\cap\Omega, \quad \phi(Q_0)=B\cap\partial\Omega
				\end{displaymath}
				
				and
				\begin{displaymath}
					\parallel \phi\parallel_{C^{k,\alpha}(\ov{Q})}+\parallel \phi^{-1}\parallel_{C^{k,\alpha}(\ov{B})}\leq M.
				\end{displaymath}
				where $Q$ is a cylinder
				\bdis
					Q :=\lbrace x=(x',x_N)\in\R^{N-1}\times\R:\vert x'\vert<1  \text{ and } \vert x_N\vert<1\rbrace,
				\edis
				\bdis
					Q_+ :=\lbrace x=(x',x_N)\in\R^{N-1}\times\R:\vert x'\vert<1 \text{ and } 0<x_N<1\rbrace
				\edis
				and
				\bdis
					Q_0 :=\lbrace x\in Q: x_N=0\rbrace,
				\edis
			see \cite[Section $1$]{DPE-hitch}.
	In order to complete the proof of Theorem \ref{t:I}, we will need the following result.
	\begin{proposition}\label{p:pbmII}
		Let $f\in L^\infty(\Rn)$, $\O$ be a bounded domain of class $C^{1,1}$ in $\Rn$ and $W\in\dot{H}^1(t^{1-2s},\RN)$ satisfy
		\bdis
			\left\lbrace
			\begin{array}{ll}
				M_sW=0 & \textrm{ in }\R^{N+1}_+,\\
				\lim_{t\rightarrow0}N_sW(t,\cdot)=f & \textrm{ on } \O,\\
				W=0 &  \textrm{ on } \Rn\setminus\O.
			\end{array}
			\right.
		\edis
		Then $\frac{W}{{\Hplus}}\in C^{s-\epsilon}([0,1]\times\overline{\O})$ and we have
		\bdis
			\left\lVert\dfrac{W}{\Hplus}\right\rVert_{C^{s-\e}([0,1]\times\overline{\O})}\leq C\left(\parallel W\parallel_{L^\infty(\RN)} + \parallel f\parallel_{L^\infty(\Rn)}\right),
		\edis
		where $C$ is a positive constant depending only on $N$, $s$, $\e$ and $\O$.
	\end{proposition}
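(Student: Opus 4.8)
The plan is to prove the pointwise H\"older bound
\[
\Big|\tfrac{W}{\Hplus}(z_1)-\tfrac{W}{\Hplus}(z_2)\Big|\le C\,C_0\,|z_1-z_2|^{s-\epsilon},\qquad z_1,z_2\in[0,1]\times\overline\O,
\]
together with $\|W/\Hplus\|_{L^\infty([0,1]\times\overline\O)}\le C\,C_0$, where $C_0:=\|W\|_{L^\infty(\RN)}+\|f\|_{L^\infty(\Rn)}$; dividing $(W,f)$ by $C_0$ (the case $C_0=0$ being trivial) we may assume $C_0=1$. Since $\overline\O$ is compact and $\de\O$ is $C^{1,1}$, there is $\rho_0=\rho_0(\O)>0$ such that for every $\xi\in\de\O$ the pair $\big(\O\cap B_{\rho_0}(\xi),\,B_{\rho_0}(\xi)\setminus\overline\O\big)$ becomes, after translating $\xi$ to the origin and rescaling by $\rho_0$, an interface of the class $\cG$ of Definition \ref{d:fix-domain}, and on $B_{\rho_0}(\xi)$ the signed distance to $\de\O$ is $C^{1,1}$ with well-defined Lipschitz nearest-point projection. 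Covering $\overline\O$ by finitely many balls $B_{\rho_0/2}(\xi)$, $\xi\in\de\O$, together with $K:=\{x\in\overline\O:\d_\O(x)\ge\rho_0/4\}$, a standard chaining argument (for $|z_1-z_2|$ above the Lebesgue number of the cover one simply uses $|g(z_1)-g(z_2)|\le2\|g\|_{L^\infty}$) reduces matters to proving the displayed bound for $z_1,z_2$ in a single piece $[0,1]\times(\overline\O\cap B_{\rho_0/2}(\xi))$ or in $[0,1]\times K$. On the interior piece the Dirichlet datum plays no role: Theorem \ref{t:interegul} applied on a finite family of half-balls and genuine balls covering $[0,1]\times K$ (together with the interior regularity for the degenerate equation used in its proof) gives $[W]_{C^{2s-\epsilon}([0,1]\times K)}\le C$, while $\Hplus(t,x)=h^+(t,\d_\O(x))$ is bounded above and below and Lipschitz on $[0,1]\times K$, so that $W/\Hplus$ is a product of a bounded $C^{2s-\epsilon}$ function and a bounded Lipschitz one.

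For the boundary piece, fix $\xi$ normalised as above, so that Proposition \ref{p:pbm} (with \eqref{eq:W} posed only on $\B_1^+$, as remarked there) holds at every point of $\Gamma\cap B_{1/2}$ and, by the explicit form of $h^+$ of Section \ref{Sect3}, $c\,d(z)^s\le\Hplus(z)\le C\,d(z)^s$ on $[0,1]\times(\overline\O\cap B_{1/2})$. For $z=(t,x)$ there let $x_0(z)\in\de\O$ be the nearest boundary point of $x$; then $|z-(0,x_0(z))|=d(z)$ and, since $x-x_0(z)=\d_\O(x)\,\nu(x_0(z))$, one has the exact identity $\cH_+^{x_0(z),\nu}(z)=h^+(t,\d_\O(x))=\Hplus(z)$. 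Let $Q(x_0)\in\R$, $|Q(x_0)|\le C$, be the coefficient of Proposition \ref{p:pbm} at $x_0\in\de\O\cap B_{1/2}$ and set $R:=W-Q(x_0)\,\cH_+^{x_0,\nu}$, so that $|R(z')|\le C\,|z'-(0,x_0)|^{2s-\epsilon}$ on $\B_1^+$ and $M_sR=0$ in $\RN$. Two facts are then used: (i) from Proposition \ref{p:pbm} and $\Hplus\ge c\,d^s$, $|g(z)-Q(x_0(z))|=|R(z)|/\Hplus(z)\le C\,d(z)^{s-\epsilon}$, in particular $\|g\|_{L^\infty}\le C$; (ii) comparing the Proposition \ref{p:pbm} expansions of $W$ at two boundary points $x_0,x_0'$ evaluated along the inward normal at the scale $\eta:=|x_0-x_0'|$, and using $\de\O\in C^{1,1}$ (whence $(x_0-x_0')\cdot\nu(x_0')=O(\eta^2)$ and $\nu(x_0)\cdot\nu(x_0')=1-O(\eta^2)$), one obtains $|Q(x_0)-Q(x_0')|\le C\,\eta^{s-\epsilon}$ along $\de\O\cap B_{1/2}$.

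Now take $z_1,z_2\in[0,1]\times(\overline\O\cap B_{\rho_0/2})$, put $\rho:=|z_1-z_2|$ and assume $d(z_1)\le d(z_2)$. If $\rho\ge\tfrac1{16}d(z_2)$, then $d(z_1),d(z_2)\le16\rho$ and $|x_0(z_1)-x_0(z_2)|\le C\rho$, so (i)--(ii) bound $|g(z_1)-g(z_2)|$ by $|g(z_1)-Q(x_0(z_1))|+|Q(x_0(z_1))-Q(x_0(z_2))|+|Q(x_0(z_2))-g(z_2)|\le C\rho^{s-\epsilon}$. If $\rho<\tfrac1{16}d(z_2)$, then $\lambda:=d(z_1)\sim d(z_2)$ and $\rho\le\lambda/16$, and there are two sub-cases. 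If the extension variables $t_1,t_2$ are $\lesssim\lambda$, a suitable $\bar x_0\in\O$ with $r:=\d_\O(\bar x_0)/2\sim\lambda$ satisfies $z_1,z_2\in\B_r^+(\bar x_0)$, and Proposition \ref{p:II} bounds $[g]_{C^{s-\epsilon}(\overline{\B_r^+}(\bar x_0))}$ by $C$. If instead $t_1\gtrsim\lambda$, the full ball $\B_{\lambda/C}(z_1)$ lies in $\{t\sim\lambda\}$ where the weight $t^{1-2s}$ is non-degenerate, and writing $g=Q(x_0)\,\cH_+^{x_0,\nu}/\Hplus+R/\Hplus$ with $x_0=x_0(z_1)$, classical interior estimates together with the $C^1$ bounds on $\Hplus$ and $(\Hplus)^{-1}$ coming from (the proof of) Lemma \ref{l:estimates} give $[\cH_+^{x_0,\nu}/\Hplus]_{C^1(\B_{\lambda/C}(z_1))}\le C$ (this quotient equals $1+O(\lambda)$) and $[R/\Hplus]_{C^1(\B_{\lambda/C}(z_1))}\le C\lambda^{s-1-\epsilon}$ (from $|R|\le C\lambda^{2s-\epsilon}$ and $M_sR=0$); since $\rho\le\lambda$, this gives $|g(z_1)-g(z_2)|\le C(1+\lambda^{s-1-\epsilon})\rho\le C\rho^{s-\epsilon}$.

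The step I expect to be the main obstacle is precisely this last sub-case, i.e.\ the estimate right up to the Dirichlet--Neumann interface $\{0\}\times\de\O$, where $W/\Hplus$ is genuinely no smoother than $C^{s-\epsilon}$: a bare interior estimate at scale $d(z)$ controls only $|\n W|\lesssim d(z)^{s-1}$, which is far too weak, and the needed gain must be extracted from the \emph{full} strength of Proposition \ref{p:pbm} --- so that $R=W-Q(x_0)\cH_+^{x_0,\nu}$ is small of order $d(z)^{2s-\epsilon}$ and hence $|\n R|\lesssim d(z)^{2s-1-\epsilon}$ --- in combination with the facts that $\cH_+^{x_0,\nu}/\Hplus$ is a perturbation of the constant $1$ with uniformly bounded gradient and that $\Hplus\simeq d^s$ carries the natural $C^1$ bounds on its reciprocal; these three gains are exactly matched so that $[g]_{C^1}\lesssim d(z)^{s-1-\epsilon}$ on balls of radius $\simeq d(z)$, which is no worse than $C^{s-\epsilon}$ after summing over dyadic scales, while the H\"older continuity of $x_0\mapsto Q(x_0)$ from (ii) glues together the contributions of distinct boundary points when $\rho\gtrsim d(z)$. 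Assembling the boundary and interior pieces by the chaining argument of the first paragraph then yields $W/\Hplus\in C^{s-\epsilon}([0,1]\times\overline\O)$ with the asserted bound, $C=C(N,s,\epsilon,\O)$.
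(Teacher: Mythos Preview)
Your proof is correct but follows a genuinely different route from the paper's. The paper relies \emph{only} on Proposition~\ref{p:II}, which already gives $[W/\Hplus]_{C^{s-\epsilon}(\overline{\B_r^+}(\bar x_0))}\le C$ on each half-ball with $r=\d_\O(\bar x_0)/2$; it then flattens $\de\O$ by a $C^{1,1}$ diffeomorphism $\phi(t,x)=(t,\psi(x))$ and upgrades these local estimates to a global one via the dyadic-path trick of \cite{RS-bound}: given $z,w\in\B_{1/8}^{++}$ at distance $r$, lift both by $r$ in the (flattened) normal direction to $\bar z,\bar w$, join $z$ to $\bar z$ and $w$ to $\bar w$ by geometric sequences $z_k=(1-\zeta^k)z+\zeta^k\bar z$, bridge $\bar z$ to $\bar w$ in finitely many steps, and sum the local bound along each link. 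No H\"older estimate on $x_0\mapsto Q(x_0)$ is ever proved or used. You instead work directly in the original geometry and split on $\rho/d(z)$: in the ``far'' regime $\rho\gtrsim d(z_2)$ you compare both values $g(z_i)$ to the boundary coefficients $Q(x_0(z_i))$ through Proposition~\ref{p:pbm} and transfer the discrepancy to $|Q(x_0)-Q(x_0')|$, whose $C^{s-\epsilon}$ bound you extract by evaluating the two expansions at a common interior point; in the ``near'' regime $\rho\ll d(z_2)$ you invoke Proposition~\ref{p:II} when $t_i$ is small relative to $\lambda\sim d(z_i)$ and genuine interior elliptic estimates on full balls in $\{t\sim\lambda\}$ otherwise. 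Your treatment of the large-$t$ region is more explicit (the paper's chain lifts only in $x_N$, and its local step carries the restriction $q,l<x_N/2$), while the paper's approach buys economy: no auxiliary regularity of $Q$, no case split on the height $t$, everything packaged into Proposition~\ref{p:II} plus one chaining.
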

 	\begin{proof}
		We use similar argument as in \cite[Proposition 1.1]{RS-bound}. We assume that $\Vert W\Vert_{L^\infty(\RN)}+\Vert f\Vert_{L^\infty(\Rn)}\leq 1$ and put $U=\dfrac{W}{\cH_{\O}^+}.$ Then by Proposition \ref{p:II}, we have
\bdis
	\dfrac{\vert U(\dot{z})-U(\dot w)\vert}{\vert\dot z-\dot w\vert^{s-\e}}\leq C,
\edis
for $\dot z=(\dot q,\dot x)$ and $\dot w=(\dot l,\dot y)$ such that  $\dot y\in B_{R/2}(\dot x)$ and $0\leq \dot q,\dot l<R/2$, where we define $R:=\delta(\dot x)$. Here $C$ is a positive constant depending only on $s$ and $\O$.\\
Our aim is to show that
\bdis
	\left[ U\right]_{C^{s-\e}([0,1]\times\overline{\O})}\leq C.
\edis
Indeed, since $\O$ has $C^{1,1}$ boundary by assumption, we can flatten the boundary of $\O$ in a neighbourhood of a point $x_0\in\de\O$. Thus, there exist a constant $\rho_0>0$ small enough and a $C^{1,1}$-diffeomorphism $\psi$ from $B_{\rho_0}(x_0)$ to $Q$ such that
$$
	\psi(\O\cap B_{\rho_0}(x_0))=\left\lbrace (x',x_N)\in B_1:x_N>0\right\rbrace, \quad \psi(\de\O\cap B_{\rho_0}(x_0))=\left\lbrace (x',0):~\vert x'\vert<1\right\rbrace
$$
and $\quad \psi(\d(x))=x_N,$ where $\d(x)=$dist$(x,\de\O)$ and $x\in\O$. We now let 
$$
	\phi(t,x):=(t,\psi(x)),\quad \textrm{for } (t,x)\in[0,\rho_0)\times(B_{\rho_0}(x_0)\cap\O),
$$
and denote $\overline{U}:=U\circ\phi^{-1}$.\\
Let $z=(q,x)$ and $w=(l,y)$ such that  $y\in B_{x_N/2}(x)$ and $0\leq q,l<x_N/2$, then $\phi^{-1}(\cdot,y)\in B_{R/2}(\phi^{-1}(\cdot,x))$. We thus have that
\bene
	\dfrac{\vert \overline{U}(z)-\overline{U}(w)\vert}{\vert z-w\vert^{s-\e}} &=& \dfrac{\vert U\circ\phi^{-1}(z)-U\circ\phi^{-1}(w)\vert}{\vert z-w\vert^{s-\e}},\\
	&=& \dfrac{\vert U\circ\phi^{-1}(z)-U\circ\phi^{-1}(w)\vert}{\vert \phi^{-1}(z)-\phi^{-1}(w)\vert^{s-\e}}\times\left(\dfrac{\vert \phi^{-1}(z)-\phi^{-1}(w)\vert}{\vert z-w\vert}\right)^{s-\e}.
\eene
Since $\phi^{-1}(\cdot,y)\in B_{R/2}(\phi^{-1}(\cdot,x))$, it is plain that
\bdis
	\dfrac{\vert U\circ\phi^{-1}(z)-U\circ\phi^{-1}(w)\vert}{\vert \phi^{-1}(z)-\phi^{-1}(w)\vert^{s-\e}}\leq C.
\edis
It is clear that
\bdis
	\vert \phi^{-1}(z)-\phi^{-1}(w)\vert \leq L \vert z-w\vert,
\edis
with $L>0$ depends only on $\O$. For any $z=(q,x)$ and $w=(l,y)$ such that  $y\in B_{x_N/2}(x)$ and $0\leq q,l<x_N/2$, we finally get that 
\be\label{eq:U1}
	\dfrac{\vert \overline{U}(z)-\overline{U}(w)\vert}{\vert z-w\vert^{s-\e}} \leq L^{s-\e} C.
\ee
	We note that \eqref{eq:U1} holds for any $z,w$ such that $\vert z-w\vert\leq \zeta x_N$, where $\zeta\in(0,\frac{\sqrt{2}}{2})$ depends on $\O$.\\
Now let $z=(q,z',z_N)$ and $w=(l,w',w_N)$ be two points in $\B_{1/8}^{++}:=[0,1/8)\times\left(\{x_N>0\}\cap B_{1/8}\right)$. We put $r=\vert z-w\vert$, $\bar{z}=(q,z',z_N+r)$ and $\bar{w}=(l,w',w_N+r)$. We also set $w_k=(1-\zeta^k)\bar{w}+\zeta^kw$ and $z_k=(1-\zeta^k)z+\zeta^k\bar{z}$, for $k\geq  1$. Thus, for $\zeta\in(0,\frac{\sqrt{2}}{2})$,
\bene
	\vert z_{k+1}-z_k\vert &=& \left\lvert (1-\zeta^{k+1})z+\zeta^{k+1}\bar{z}-\left[(1-\zeta^k)z+\zeta^k\bar{z}\right]\right\rvert,\\
	&=& \vert \left(1-\zeta^{k+1}-1-\zeta^k\right)z-\left(\zeta^{k+1}-\zeta^k\right)\bar{z}\vert,\\
	&=& \left(1-\zeta\right)\zeta^k\vert(q,z',z_N)-(q,z',z_N+r)\vert,\\
	& \leq & \zeta r <  \zeta x_N
\eene
and similarly
\bdis
	\vert w_{k+1}-w_k\vert\leq \zeta r\leq \zeta x_N.
\edis
Using \eqref{eq:U1}, we have that
\be\label{eq:U2}
	\vert \overline{U}(z_{k+1})-\overline{U}(z_k)\vert\leq C\vert z_{k+1}-z_k\vert^{s-\e}\leq Cr^{s-\e}
\ee
and
\be\label{eq:U3}
	\vert \overline{U}(w_{k+1})-\overline{U}(w_k)\vert \leq Cr^{s-\e}.
\ee
Recall that $r=\vert z-w\vert=\vert \bar{z}-\bar{w}\vert$. We put
$$
\bar{h}_l:=(1-\mu^l)\bar{z}+\mu^l\bar{w},
$$
 where $\mu\in(0,\zeta),~l=1,2,...,M$, with $\bar{h}_0=\bar{w}$ and $\bar{h}_{M+1}=\bar{z}$. We then have 
$$
\vert \bar{h}_{l+1}-\bar{h}_l\vert\leq \zeta r<\zeta x_N
$$
and thus
\be\label{eq:U4}
	\vert \overline{U}(\bar{z})-\overline{U}(\bar{w})\vert \leq \sum_{l=1,...,M}\vert \overline{U}(\bar{h}_{l+1})-\overline{U}(\bar{h}_l)\vert \leq C\vert \bar{h}_{l+1}-\bar{h}_l\vert^{s-\e} \leq C\vert \bar{z}-\bar{w}\vert^{s-\e}\leq Cr^{s-\e}.
\ee
By \eqref{eq:U2}, \eqref{eq:U3} and \eqref{eq:U4}, we finally get
\bene
	\vert \overline{U}(z)-\overline{U}(w)\vert & \leq & \sum_{k\geq 1}\vert \overline{U}(z_{k+1})-\overline{U}(z_k)\vert + \vert \overline{U}(\bar{z})-\overline{U}(\bar{w})\vert + \sum_{k\geq 1}\vert \overline{U}(w_{k+1})-\overline{U}(w_k)\vert,\\
	& \leq & C\sum_{k\geq 1}\zeta^{k(s-\e)}\vert z-\bar{z}\vert^{s-\e} + C\vert \bar{z}-\bar{w}\vert^{s-\e} + C\sum_{k\geq 1}\zeta^{k(s-\e)}\vert w-\bar{w}\vert^{s-\e},\\
	& \leq & Cr^{s-\e}\sum_{k\geq 1}\zeta^{k(s-\e)} + Cr^{s-\e}\leq C\vert z-w\vert^{s-\e},
\eene
up to relabeling the positive constant $C$. Therefore,
\bdis%\label{eq:U5}
	\left[\overline{U}\right]_{C^{s-\e}(\B_{1/8}^{++})}\leq C.
\edis
	Thus by compactness of $\ov\O$, we then deduce that
	\bdis
	\left[U\right]_{C^{s-\e}([0,1]\times\ov{\O})}\leq C.
	\edis
	By adding the $L^\infty$ bound, we have that $U\in C^{s-\e}([0,1]\times\ov{\O})$ and
	\bdis
		\Vert U\Vert_{C^{s-\e}([0,1]\times\ov{\O})}\leq C,
	\edis
	where $C$ depends only on $N$, $s,~\e$ and $\O$.
 	\end{proof}
\begin{proof}{ of \bf Theorem \ref{t:I}}
	By Proposition \ref{p:pbmII}, that is $\frac{W}{{\Hplus}}\in C^{s-\e}([0,1]\times\overline{\O})$, it suffices to prove that $\frac{\cH_{\O}^+}{d^s}\in C^{s-\e}([0,1]\times\overline{\O}).$
	For $z_1,z_2\in\B_r^+({\bar x}_0)$, we decompose
	\bene
		\dfrac{\cH_{\O}^+}{d^s}(z_1)-\dfrac{\cH_{\O}^+}{d^s}(z_2) &=& \dfrac{\left(\cH_{\O}^+-d^s\right)(z_1)-\left(\cH_{\O}^+-d^s\right)(z_2)}{d^s(z_1)}\\
		&+& \left(\cH_{\O}^+-d^s\right)(z_2)\left[d^{-s}(z_1)-d^{-s}(z_2)\right].
	\eene
	By \eqref{eq:est22}, we have
	\bdis
		 \dfrac{\left\lvert\left(\cH_{\O}^+-d^s\right)(z_1)-\left(\cH_{\O}^+-d^s\right)(z_2)\right\rvert}{d^s(z_1)}\leq \frac{r^s}{d^s(z_1)}C\vert z_1-z_2\vert^{s-\e}\leq C\vert z_1-z_2\vert^{s-\e}.
	\edis
	Also by \eqref{eq:est21} and \eqref{eq:est23}, we obtain that 
	\bene
		\left\lvert (\cH_{\O}^+ -d^s)(z_2)\left[d^{-s}(z_1)-d^{-s}(z_2)\right]\right\rvert & \leq & Cr^{2s}r^{-2s+\e}\vert z_1-z_2\vert^{s-\e},\\
		& = & Cr^\e\vert z_1-z_2\vert^{s-\e},\\
		& \leq & C\vert z_1-z_2\vert^{s-\e}.
	\eene
	Going back to the decomposition, we deduce that
	\bdis
		\left[\dfrac{\cH_{\O}^+}{d^s}\right]_{C^{s-\e}(\B^+_r({\bar x}_0))}\leq C.
	\edis
	Using similar arguments as in the proof of Proposition \ref{p:pbmII}, we thus get
	\bdis
		\left[\dfrac{\cH_{\O}^+}{d^s}\right]_{C^{s-\e}([0,1]\times\overline{\O})}\leq C.
	\edis
	Noting that the $L^\infty$ bound follows from Lemma \ref{l:estimates2}, we deduce that
	\bdis
		\dfrac{\cH_{\O}^+}{d^s}\in C^{s-\e}([0,1]\times\overline{\O}).
	\edis
	Therefore
	\bdis
		\dfrac{W}{d^s}\in C^{s-\e}([0,1]\times\overline{\O})
	\edis
	and moreover
	\bdis
		\left\lVert\dfrac{W}{d^s}\right\rVert_{C^{s-\e}([0,1]\times\overline{\O})}\leq C\left(\Vert W\Vert_{L^\infty(\Rn)}+\Vert f\Vert_{L^\infty(\Rn)}\right),
	\edis
	as desired.
\end{proof}
 {99}
 
\end{document}